\patchcmd{\ttlh@hang}{\parindent\z@}{\parindent\z@\leavevmode}{}{}
\patchcmd{\ttlh@hang}{\noindent}{}{}{}
\theoremstyle{plain}
\newtheorem{theo}{Theorem}
\newtheorem{prop}[theo]{Proposition}
\newtheorem{cor}[theo]{Corollary}
\newtheorem{lem}[theo]{Lemma}
\theoremstyle{definition}
\newtheorem{defi}[theo]{Definition}
\newtheorem{ex}[theo]{Example}
\theoremstyle{remark}
\newtheorem{rem}[theo]{Remark}
\DeclareMathOperator{\Seg}{Seg}
\DeclareMathOperator{\Mult}{Mult}
\DeclareMathOperator{\Irr}{Irr}
\DeclareMathOperator{\id}{Id}
\DeclareMathOperator{\soc}{soc}
\DeclareMathOperator{\cs}{cos}
\DeclareMathOperator{\sgn}{sgn}
\newcommand{\BC}{\mathbb C}
\newcommand{\BZ}{\mathbb Z}
\newcommand{\CC}{\mathcal C}
\newcommand{\CL}{\mathcal L}
\newcommand{\CR}{\mathcal R}
\newcommand{\LC}{{\rm LC}}
\newcommand{\RC}{{\rm RC}}
\newcommand{\LI}{{\rm LI}}
\newcommand{\RI}{{\rm RI}}
\newcommand{\GL}{{\rm GL}}
\newcommand{\m}{\mathfrak{m}}
\numberwithin{equation}{section}
\numberwithin{theo}{section}
\title{On a determinant formula for some real regular representations}
\author{L\'{e}a Bittmann}
\address{L\'{e}a Bittmann, Hodge Institute, University of Edinburgh, United Kingdom.}
\email{lea.bittmann@ed.ac.uk}
\date{} 
\begin{document}

\begin{abstract}
We interpret a formula established by Lapid-M\'{\i}nguez on real regular representations of $\GL_n$ over a local non-archimedean field as a matrix determinant. We use the Lewis Carroll determinant identity to prove new relations between real regular representations. Through quantum affine Schur-Weyl duality, these relations generalize Mukhin-Young's \emph{Extended $T$-systems}, for representations of the quantum affine algebra $U_q(\widehat{\mathfrak{sl}}_k)$, which are themselves generalizations of the celebrated \emph{$T$-system} relations. 
\end{abstract}

\maketitle

\setcounter{tocdepth}{1}
\tableofcontents

\section{Introduction}

The context of this work is the representation theory of $\GL_n(F)$ (where $F$ is a non-archimedean local field), or equivalently of the type $A$ quantum affine algebra $U_q(\widehat{\mathfrak{sl}}_k)$ (where $q\in \BC^\times$ is not a root of unity). Indeed, through Chari-Pressley's \emph{quantum affine Schur-Weyl duality} \cite{CP95}, the category of complex smooth finite-length representations of $\GL_n(F)$ is equivalent to the category of (level $n$) finite-dimensional $U_q(\widehat{\mathfrak{sl}}_k)$-modules, when $k\geq n$. Since both contexts are equivalent, we will work with the category $\CC$ of $\GL_n(F)$ representations in most of this paper. Both these categories have been intensively (and independently) studied, but some important natural questions remain open. 

The normalized parabolic induction, denoted by $\times$, endows this category with a ring category structure, and its Grothendieck group $\CR$ with a ring structure.
Irreducible representations in the category $\CC$ have been classified by Zelevinsky \cite{Z80} using \emph{multisegments} (formal sums of segments). For $\m = \Delta_1 + \Delta_2 + \cdots + \Delta_N$ a multisegment, the corresponding irreducible representation $Z(\m)$ is obtained as the unique irreducible subrepresentation of the \emph{standard representation} $\zeta(\m) := Z(\Delta_1)\times Z(\Delta_2)\times \cdots \times Z(\Delta_N)$.
The classes of the irreducible representations and the standard representations form two bases of the Grothendieck ring $\CR$, the change of basis matrix between them is unitriangular, with coefficients which can be expressed in terms of Kazhdan-Lusztig polynomials (see \cite{Z81}, \cite{CG97}). A similar story was established for finite-dimensional representations of $U_q(\widehat{\mathfrak{sl}}_k)$ (see the work of Nakajima \cite{QVFDR}). This gives an algorithm to compute the classes of the simple representations from the classes of the standard representations. However, in practice the actual computation of the coefficients can be very difficult. 

For some specific classes of irreducible representations, remarkable formulas have been established to compute their classes as linear combinations of classes of standard representations. The work of Tadić \cite{T95}, and then Chenevier-Renard \cite{CR08}, established such a formula for \emph{Speh representations}.
Cleverly, this formula can be seen as the computation of the determinant of a matrix, and it was then proved using the \emph{Lewis Carroll identity} (also called \emph{Dodgson’s rule of determinant}). In \cite{LM14}, Lapid-M\'{\i}nguez generalized Tadić's formula to a larger class of representations called \emph{ladder representations}. 
Then, in \cite{LM18} the same authors established an even more general formula (see \eqref{eq_alt_sum} below), for \emph{regular representations} which are \emph{real} - $Z(\m)$ such that $Z(\m)\times Z(\m)$ is irreducible. 

Furthermore, in \cite{LM14}, Lapid-M\'{\i}nguez used the Lewis Carroll identity to obtain a remarkable relation between the classes of some of these ladder representations. For $Z(\m)= Z(\Delta_1 + \cdots + \Delta_N$) a ladder representation, we have the following relation in $\CR$ \cite[Corollary 12]{LM14}:
\begin{multline}\label{eq_ext_T-sys_intro}
Z(\Delta_1 + \cdots + \Delta_{N-1})\times Z(\Delta_2+ \cdots + \Delta_{N}) = Z(\m)\times Z(\Delta_2 + \cdots + \Delta_{N-1}) \\
+ Z(\m')\times Z(\m''),
\end{multline}
where $Z(\m'),Z(\m'')$ are also ladders (see Theorem~\ref{theo_ladder}). 
Through quantum affine Schur-Weyl duality, relation \eqref{eq_ext_T-sys_intro} has been established independently by Mukhin-Young in \cite[Theorem~4.1]{MY12} for representations of the quantum affine algebra $U_q(\widehat{\mathfrak{sl}}_k)$, under the name \emph{Extended $T$-systems}.

The extended $T$-systems are generalizations of the famous \emph{$T$-system relations}, which are sets of recurrence relations of crucial importance in the study of certain integrable systems (see review \cite{KNS11}). For representations of quantum affine algebras, the $T$-systems are relations in the Grothendieck ring $\CR$ between classes of Speh representations (called \emph{Kirillov-Reshetikhin modules} there). These relations were proved in all simply-laced types ($A$, $D$ or $E$) by Nakajima \cite{QVFDR} and in all types by Hernandez \cite{KRC}. Additionally, the $T$-systems, and their extended version, can be interpreted as \emph{short exact sequences} between irreducible finite-dimensional $U_q(\widehat{\mathfrak{g}})$-modules.

More recently, the $T$-systems gained a new interpretation as \emph{exchange relations} in a Fomin-Zelevinsky cluster algebra \cite{CA1}. Indeed, in \cite{ACAA} Hernandez-Leclerc proved this interpretation of $T$-systems as cluster transformations and used it to the prove that the Grothendieck ring of the category of finite-dimensional $U_q(\widehat{\mathfrak{g}})$-modules (in all Dynkin types) had the structure of a cluster algebra. Note that Duan-Li-Luo obtained in \cite{DLL20} another generalization of the $T$-systems, different from Mukhin-Young extended $T$-systems, which they also interpreted as exchange relations in the cluster algebra structure.

In the present work, we establish formulas generalizing the extended $T$-systems of Mukhin-Young, for some real regular representations. Regular representations have a permutation associated to them and in \cite{LM18}, Lapid-M\'{\i}nguez gave a sufficient condition for a regular representation to be real, as a pattern avoidance condition on the permutation associated to the representation. We show, using the notion of \emph{Ferres boards} and the work of Sjostrand \cite{S07} that under the same pattern avoidance condition, Lapid-M\'{\i}nguez's formula \cite[Theorem 1.2 (9)]{LM18} can be written as a matrix determinant. Our relations are then obtained using some choice of Lewis Carroll identities. As our main result, we prove the following (Theorem~\ref{theo_main} and Corollary~\ref{cor_main}): for $Z(\m)= Z(\Delta_1 + \cdots + \Delta_N$) a regular representation such that the associated permutation $\sigma$ avoids the patterns $3412$ and $4231$, we have the following relations in $\CR$ (\eqref{eq_moins_N} and \eqref{eq_moins_1}):
\begin{align}
Z(\m\setminus \Delta_N)\times Z(\m\setminus \Delta_{\sigma(N)}) & = Z(\m)\times Z(\m\setminus \Delta_N,\Delta_{\sigma(N)}) + Z(\m_1')\times Z(\m_1''),\label{eq_main_intro_1}\\
Z(\m\setminus \Delta_1)\times Z(\m\setminus \Delta_{\sigma(1)}) & = Z(\m)\times Z(\m\setminus \Delta_1,\Delta_{\sigma(1)}) + Z(\m_2')\times Z(\m_2''),\label{eq_main_intro_2}
\end{align}
where $m_1',m_1'',m_2'$ and $m_2''$ are real regular representations.

As part of Theorem~\ref{theo_main} and Corollary~\ref{cor_main}, we also prove that, as the extended $T$-systems, these relations correspond to a decomposition of a module of length 2, i.e. the two terms in the right hand side of \eqref{eq_main_intro_1} and \eqref{eq_main_intro_2} are irreducible representations. We prove this using Lapid-M\'{\i}nguez's \cite{LM16} combinatorial irreducibility criteria, as well as a newly introduced notion of \emph{good segments} in a mutlisegment, which enables us to prove by induction that some parabolic induction of irreducible representations are irreducible.

The paper is organized as follows. We start with some reminders about segments, multisegments, $p$-adic representations of $\GL_n(F)$ and the Zelevinsky classification in Section~\ref{sect_prelim}. We also recall 
Lapid-M\'{\i}nguez's \cite{LM16} irreducibility criteria for a parabolic induction of two representations, using socles and cosocles.
In Section~\ref{sect_good}, we introduce the notion of good segments and use it to obtain some combinatorial criteria to prove that certain parabolic inductions $Z(\Delta) \times Z(\m)$, where $Z(\m)$ is a regular representation are irreducible. We also prove an existence result for good segments (Proposition~\ref{prop_existence}). In particular, we obtain that every regular representation whose permutation avoids the patterns $3412$ and $4231$ has at least two good segments, from which we can recover that such representations are real. In Section~\ref{sect_det_formula}, we use the notion of Ferres boards and results from  Sjostrand \cite{S07} and Chepuri--Sherman-Bennett \cite{CSB21} to write existing relations as determinants of matrices. The main result is stated and then proved in Section~\ref{sect_main}, in which we also give examples. Finally, in Section~\ref{sect_QAA} we translate our results to the context of quantum affine algebra representations, and give some perspective, in particular in relation to cluster algebras.

\subsection*{Acknowledgements}

We would like to thank Alberto Mínguez for providing inspiration for this work. The author was partially supported by the European Research Council (ERC) under the European Union's Horizon 2020 research and innovation programme under grant agreement No 948885 and by the Royal Society University Research Fellowship.

\section{Preliminaries}\label{sect_prelim}

\subsection{Segments and multisegments}

\begin{defi}
A \emph{segment} is a pair of integers $a\leq b\in \BZ$, denoted by $[a;b]$. 

Let $\Seg$ denote the set of segments.
\end{defi}

The extremities of the segment $\Delta =[a;b]\in \Seg$ are denoted by $b(\Delta)=a$ and $e(\Delta)=b$.
We also write $\overleftarrow{\Delta} = [a-1;b-1]$.

\begin{defi}
Two segments $\Delta=[a;b]$ and $\Delta'=[c;d]$ are \emph{linked} if
\[
\begin{array}{rlll}
& a<c & \text{and} & c-1\leq b<d,\\
\text{or} & c<a & \text{and} & a-1\leq d <b.
\end{array}
\]
In the first case, we say that $\Delta $ \emph{precedes} $\Delta'$ and write $\Delta \prec \Delta'$.
\end{defi}

\begin{ex}
A few examples of linked and unlinked pairs of segments:
\begin{center}
\begin{tikzpicture}[scale=0.7]
\node[label={[yshift=-4pt]1}]  at (0,0) {};
\node[label={[yshift=-4pt]3}]  at (3,0) {};
\node[label={[yshift=-4pt]4}]  at (4,0) {};
\node[label={[yshift=-4pt]5}]  at (5,0) {};
\draw[very thick] (0,0) to (3,0);
\draw[very thick] (4,0) to (5,0);
\node[anchor=west] at (5.5,0.2) {are linked.};

\node[label={[yshift=-4pt]1}]  at (0,-1) {};
\node[label={[yshift=-4pt]2}]  at (2,-1) {};
\node[label={[yshift=-4pt]4}]  at (4,-1) {};
\node[label={[yshift=-4pt]5}]  at (5,-1) {};
\draw[very thick] (0,-1) to (2,-1);
\draw[very thick] (4,-1) to (5,-1);
\node[anchor=west] at (5.5,-0.8) {are not linked.};

\node[label={[yshift=-4pt]1}]  at (11,0) {};
\node[label={[below,yshift=-7pt]4}]  at (15,0) {};
\node[label={[yshift=-4pt]3}]  at (14,0.5) {};
\node[label={[yshift=-4pt]5}]  at (16,0.5) {};
\draw[very thick] (11,0) to (15,0);
\draw[very thick] (14,0.5) to (16,0.5);
\node[anchor=west] at (16.5,0.5) {are linked.};

\node[label={[yshift=-4pt]1}]  at (11,-1) {};
\node[label={[yshift=-4pt]5}]  at (16,-1) {};
\node[label={[below,yshift=-7pt]2}]  at (13,-1.5) {};
\node[label={[below,yshift=-7pt]4}]  at (15,-1.5) {};
\draw[very thick] (11,-1) to (16,-1);
\draw[very thick] (13,-1.5) to (15,-1.5);
\node[anchor=west] at (16.5,-1) {are not linked.};
\end{tikzpicture}
\end{center}
\end{ex}

\begin{defi}
A \emph{multisegment} $\m$ is a finite formal sum of segments of $\Seg$ (with possible multiplicities), $\m=\Delta_1 +\Delta_2 + \cdots + \Delta_N$.
Let $\Mult$ denote the set of multisegments.
\end{defi}

A sequence of segments $(\Delta_1,\ldots,\Delta_N)$ is said to be \emph{ordered} if, for all $1\leq i<j\leq N$, $\Delta_i$ does not precedes $\Delta_j$. If $\m\in\Mult$, and $(\Delta_1,\ldots,\Delta_N)$ is an ordered sequence of segments such that $\m = \Delta_1 +\cdots +\Delta_N$, we say that $(\Delta_1,\ldots,\Delta_N)$ is an \emph{ordered form} of $\m$.

\subsection{Representations}\label{sect_rep}

Let $F$ be a non-archimedean local field with a normalized
absolute value $|\cdot|$ and let $D$ be a finite-dimensional central division $F$-algebra. For $n\in \BZ_{\geq 1}$, let $\CC(\GL_n)$ be the category of complex, smooth representations of $\GL_n(D)$ of finite length and $\Irr(\GL_n)$ the set of equivalence classes
of irreducible objects of $\CC(\GL_n)$.
For $\pi_i \in \CC(\GL_{n_i})$, $i=1,2$, denote by $\pi_1 \times \pi_2 \in \CC(\GL_{n_1+n_2})$ the representation which is parabolically induced from $\pi_1 \otimes \pi_2$.
The parabolic induction endows the category $\bigoplus_{N \geq 0}\CC(\GL_n)$ with the structure of a tensor category.

For any supercuspidal representation $\rho \in \bigcup_{n\in \BZ_{\geq 0}}\Irr(\GL_n)$, there exists a unique positive real number $s_\rho$ such that $\rho|\cdot|^{s_{\rho}}\times \rho$ is reducible. Let $\nu_{\rho}= |\cdot|^{s_{\rho}}$, we write $\overrightarrow{\rho} = \rho \nu_{\rho}$, $\overleftarrow{\rho}=\rho \nu_{\rho}^{-1}$.
A \emph{cuspidal line} is an equivalence class on $\bigcup_{n\in \BZ_{\geq 0}}\Irr(\GL_n)$ for the equivalence relation given by $\rho \sim \overrightarrow{\rho}$.

For a fixed cuspidal line $\CL$, consider $\CC_\CL$ the Serre ring subcategory of $\bigoplus_{N \geq 0}\CC(\GL_n)$ consisting of the representations whose supercuspidal support is contained in $\CL$. Then all categories $\CC_\CL$ are equivalent as ring categories and the study of $\bigoplus_{N \geq 0}\CC(\GL_n)$ amounts to the study of one $\CC_\CL$. From now on, we fix a cuspidal line, drop the subscript and consider the category $\CC$, its set of equivalence classes of irreducible objects $\Irr$ and its Grothendieck ring $\CR$.

For $\Delta=[a;b]\in \Seg$, consider the induced representation
\[ I[a;b] := \rho\nu_\rho^a \times \rho\nu_\rho^{a+1} \times \cdots \times \rho\nu_\rho^b.
\]

\begin{defi} We consider the socle and cosocle of this representation:
\begin{align*}
Z[a;b] :=\soc(I[a;b]), & \quad \text{maximal semi-simple submodule,}\\
L[a;b] :=\cs(I[a;b]), & \quad \text{maximal semi-simple quotient.}
\end{align*}
\end{defi}

The following is known (see for example \citep{Z80}).

\begin{prop}\label{prop_notprime}
For $\Delta_1,\ldots,\Delta_N\in \Seg$, $Z(\Delta_1)\times \cdots \times Z(\Delta_N)$ (resp. $L(\Delta_1)\times \cdots \times L(\Delta_N)$) is irreducible if and only if the segments $\Delta_1,\ldots,\Delta_N$ are pairwise unlinked.
\end{prop}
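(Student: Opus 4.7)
The plan is to reduce the proposition to the two-segment case $N=2$ (Zelevinsky's original irreducibility criterion) via the commutativity of parabolic induction at the level of Grothendieck classes. Concretely, for any $\pi_1,\pi_2 \in \CC$, $[\pi_1 \times \pi_2] = [\pi_2 \times \pi_1]$ in $\CR$, a standard consequence of the Bernstein--Zelevinsky geometric lemma. Hence, setting $\m = \Delta_1 + \cdots + \Delta_N$ and letting $\zeta(\m)$ denote the standard representation for any ordered form, we have
\[
[Z(\Delta_{\sigma(1)}) \times \cdots \times Z(\Delta_{\sigma(N)})] = [\zeta(\m)] \quad \text{in } \CR, \text{ for every permutation } \sigma.
\]
This turns the problem into a statement about the composition length of $\zeta(\m)$ in $\CR$, which is independent of the ordering of factors.

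I would handle the base case $N=2$ directly: $Z(\Delta_1) \times Z(\Delta_2)$ is irreducible if and only if $\Delta_1, \Delta_2$ are unlinked. When the segments are linked, one exhibits an explicit proper subrepresentation constructed from the two natural segments obtained from the linked pair (their ``union'' and ``intersection''), yielding a composition factor distinct from the socle; when they are unlinked, a Jacquet module computation along the appropriate maximal parabolic shows that the induced representation is irreducible.

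For the \emph{only if} direction at general $N$: if some pair $(\Delta_i,\Delta_j)$ is linked, the base case gives that $[Z(\Delta_i) \times Z(\Delta_j)]$ has length at least $2$ in $\CR$; multiplying by $\prod_{k\neq i,j}[Z(\Delta_k)]$ and using non-negativity of multiplicities in $\CR$, the full class has length at least $2$, so the representation is reducible. For the \emph{if} direction: by induction on $N$, assuming all segments pairwise unlinked, the induction hypothesis on $\{\Delta_2,\ldots,\Delta_N\}$ yields that $\pi' = Z(\Delta_2) \times \cdots \times Z(\Delta_N)$ is irreducible and equals $Z(\m')$ with $\m' = \Delta_2 + \cdots + \Delta_N$. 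One then verifies that $Z(\Delta_1) \times Z(\m')$ is irreducible, for example by computing that its socle and cosocle both equal $Z(\m)$, forcing length one.

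The $L$ case is formally identical; alternatively it follows from the $Z$ case by applying the Zelevinsky involution, which is a ring automorphism of $\CR$ that exchanges $[Z(\Delta)]$ and $[L(\Delta)]$ and preserves linkedness. The main obstacle is the $N = 2$ base case, which rests on Zelevinsky's original Jacquet-module computation (or equivalently on the construction of the embedding $Z(\m)\hookrightarrow \zeta(\m)$ together with an explicit second subquotient in the linked case). Everything else is a formal manipulation in the Grothendieck ring using the symmetry of parabolic induction.
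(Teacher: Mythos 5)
The paper itself offers no proof of Proposition~\ref{prop_notprime}: it is quoted as a known result from \cite{Z80}, where Zelevinsky proves it (his Theorem~4.2) by induction using the theory of derivatives. So your proposal is necessarily a different route from the source the paper relies on. Its ``only if'' half is correct and complete: commutativity of classes in $\CR$, length at least $2$ for the product over a linked pair, and positivity of decompositions into classes of irreducibles do combine to show reducibility.

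The genuine gap is in the inductive step of the ``if'' half. The claim that socle and cosocle both isomorphic to $Z(\m)$ ``forces length one'' is false as stated: a uniserial module with composition factors $S$, $T$, $S$ has simple socle and simple cosocle, both isomorphic to $S$, yet has length three. What is missing is precisely that $Z(\m)$ occurs with multiplicity one in the Jordan--H\"older sequence of $Z(\Delta_1)\times Z(\m')$ --- this is hypothesis (c) of the paper's Lemma~\ref{lem_irr}, which your argument silently drops. That multiplicity-one statement is true, since $Z(\Delta_1)\times Z(\m')\subset \zeta(\m)$ and $Z(\m)$ has multiplicity one in $\zeta(\m)$; but this fact, as well as the identification of the socle and cosocle you would need (namely that $\zeta(\m)$ taken in an admissible order has unique irreducible submodule $Z(\m)$, dually for quotients in the reversed order, together with the observation that pairwise unlinkedness makes \emph{every} ordering admissible, so that $Z(\Delta_1)\times Z(\m')$ is simultaneously a submodule of one such product and a quotient of another), are themselves theorems of \cite{Z80} proved via derivatives. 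So once these ingredients are invoked explicitly your argument closes, but it is not more elementary than, nor independent of, Zelevinsky's: the hard content has been displaced into the cited socle/cosocle/multiplicity facts rather than removed. The deduction of the $L$ case from the $Z$ case via the Zelevinsky involution is fine, provided you also cite that the involution permutes the classes of irreducible representations, which is a nontrivial theorem in its own right.
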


For $\m\in\Mult$ and $(\Delta_1,\ldots,\Delta_N)$ an ordered form of $\m$, define the \emph{standard module}:
\[\zeta(\m):= Z(\Delta_1)\times \cdots \times Z(\Delta_N).\]
From the previous proposition, $\zeta(\m)$ does not depend on the chosen order.

\begin{theo}\cite{Z80}[Zelevinsky Classification]
The map
\begin{equation*}
\m  \mapsto Z(\m):=\soc(\zeta(\m)),
\end{equation*}
defines a bijection 
\[\Mult  \xrightarrow{\sim} \Irr.\]
\end{theo}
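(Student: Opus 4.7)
The plan is to establish the bijection in three stages: (i) show that for every multisegment $\m$ the socle $\soc(\zeta(\m))$ is indeed irreducible with multiplicity one, so that $Z(\m)$ is well defined; (ii) prove injectivity via a unitriangularity argument with respect to a suitable partial order on $\Mult$; (iii) prove surjectivity by showing that every irreducible embeds into some standard module.

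For step (i), I would fix an ordered form $(\Delta_1,\ldots,\Delta_N)$ of $\m$ and argue by induction on $N$. The base case $N=1$ is immediate from the definition of $Z[a;b]=\soc(I[a;b])$, which is irreducible because the induced representation of segment type is known to have an irreducible socle. In the inductive step, Proposition~\ref{prop_notprime} handles the pairwise unlinked case: there $\zeta(\m)$ itself is irreducible and equals its own socle. Otherwise, I would exploit the ordered form by peeling off a minimal segment $\Delta_j$ in the preceding order and computing the Jacquet module of $\zeta(\m)$ via the Bernstein--Zelevinsky geometric lemma. Frobenius reciprocity then translates the question of embeddings $\pi\hookrightarrow \zeta(\m)$ into a question about the appearance of particular constituents in this Jacquet module, and one identifies a unique such constituent coming from the ``leading term'' of the geometric filtration.

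For step (ii), I would introduce Zelevinsky's partial order on $\Mult$, in which $\m'\leq \m$ whenever $\m'$ is obtained from $\m$ by a finite sequence of elementary operations replacing a pair of linked segments $(\Delta,\Delta')$ by the pair $(\Delta\cup\Delta',\Delta\cap\Delta')$ (with the convention that the intersection may be empty and then dropped). Combining step (i) with another application of the geometric lemma to $\zeta(\m)$ shows that every irreducible constituent $[Z(\m')]$ of $[\zeta(\m)]$ satisfies $\m'\leq \m$, and that $[Z(\m)]$ itself occurs with multiplicity exactly one. Thus the transition matrix between the families $\{[\zeta(\m)]\}_{\m\in\Mult}$ and $\{[Z(\m)]\}_{\m\in\Mult}$ is unitriangular, which yields both injectivity of $\m\mapsto Z(\m)$ and linear independence in $\CR$.

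For step (iii), any $\pi\in\Irr$ has a supercuspidal support contained in the fixed cuspidal line $\CL$, so $\pi$ embeds into some iterated parabolic induction of the form $\rho\nu_\rho^{a_1}\times\cdots\times\rho\nu_\rho^{a_n}$. Grouping and reordering the factors into maximal segments (using that non-linked segments commute under $\times$) realises this representation as $\zeta(\m)$ for a multisegment $\m$ reading off the supercuspidal support, hence $\pi\hookrightarrow\zeta(\m)$ and by step (i) $\pi\cong Z(\m)$. I expect the main obstacle to lie in step (i): proving that the socle is irreducible and multiplicity-free is the technical heart of the theorem, and carrying out the inductive argument cleanly requires a careful interaction between the choice of ordering on the $\Delta_i$ and the combinatorics of linked pairs, with the geometric lemma providing the indispensable tool for controlling Jacquet modules of standard representations.
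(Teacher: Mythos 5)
This theorem is not proved in the paper at all: it is quoted from Zelevinsky \cite{Z80}, so your attempt can only be compared with the standard argument in the literature. Your three-step skeleton (well-definedness of the socle, unitriangularity of the transition matrix with respect to Zelevinsky's partial order on $\Mult$, then surjectivity) is indeed the skeleton of that argument, and your step (ii) is correct in outline. Step (i), however, which you rightly identify as the technical heart, is left as a plan rather than a proof; in \cite{Z80} the key tool is the Bernstein--Zelevinsky derivative functors (the highest derivative of $Z(\m)$ is computed and used inductively), not a bare Jacquet-module induction, and this is where most of the actual work lies.

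There is also a concrete error in step (iii). After embedding $\pi$ into $\rho\nu_\rho^{a_1}\times\cdots\times\rho\nu_\rho^{a_n}$, grouping consecutive exponents into maximal segments produces the representation $I[c_1;d_1]\times\cdots\times I[c_k;d_k]$, \emph{not} $\zeta(\m)=Z[c_1;d_1]\times\cdots\times Z[c_k;d_k]$: each $I[c;d]$ is the full induced representation, of which $Z[c;d]$ is only the socle, so whenever a maximal segment has length at least $2$ the grouped product is strictly bigger than $\zeta(\m)$ and the claimed identification fails; in addition, the ordering of cuspidal factors supplied by Frobenius reciprocity need not be an ordered form. The standard repair is to use the multisegment of singleton segments $\m_0=[a_1;a_1]+\cdots+[a_n;a_n]$, for which (in a suitable order, e.g. decreasing) the product of cuspidals genuinely \emph{is} $\zeta(\m_0)$, and to observe that $\pi$ is a subquotient of it. Surjectivity then follows from a spanning argument: the classes $[\zeta(\m)]$ span $\CR$, hence by the unitriangularity of your step (ii) the classes $[Z(\m)]$ span as well, and since the classes of irreducible representations form a basis of $\CR$, every irreducible must equal some $Z(\m)$. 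With that replacement, your outline matches the accepted proof; as written, step (iii) does not go through.
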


\subsection{Families of representations}

We are interested in some particular families of representations.
Let $Z(\m)$ be an irreducible representation, with $\m = \Delta_1 + \cdots + \Delta_N \in \Mult$.

\begin{defi}\label{def_speh}
The irreducible representation $Z(\m)$ is a \emph{Speh} representation if $\Delta_{i+1} = \overleftarrow{\Delta_i}$, for all $1\leq i\leq N-1$.
\end{defi}

\begin{ex}
The representations corresponding to the multisegments
 \[
\begin{tikzpicture}[scale=0.7]
\node[anchor=east, scale=0.8] at (-0.5,1) {$[3;3]+ [2;2]+ [1;1]+[0;0]=$};
\node[label={[yshift=-4pt]0}] at (0,0) {$\bullet$};
\node[label={[yshift=-4pt]1}] at (1,0.5) {$\bullet$};
\node[label={[yshift=-4pt]2}] at (2,1) {$\bullet$};
\node[label={[yshift=-4pt]3}] at (3,1.5) {$\bullet$};
\node at (4,1) {and};
\node[anchor=east,, scale=0.8] at (9.5,1) {$[2;4]+[1;3]+[0;2]=$};
\node[label={[yshift=-4pt]4}] at (14,1.5) {};
\node[label={[yshift=-4pt]2}] at (12,1.5) {};
\node[label={[yshift=-4pt]1}] at (11,1) {};
\node[label={[yshift=-4pt]0}] at (10,0.5) {};
\node[label={[below, yshift=-6pt]3}] at (13,1) {};
\node[label={[below, yshift=-6pt]2}] at (12,0.5) {};
\draw[ultra thick] (14,1.5) to (12,1.5);
\draw[ultra thick] (13,1) to (11,1);
\draw[ultra thick] (12,0.5) to (10,0.5);
\end{tikzpicture}
\]
are Speh representations.
\end{ex}

\begin{defi}\label{def_ladder}
The irreducible representation $Z(\m)$ is a \emph{ladder} representation if, for all $1\leq i\leq N-1$, $e(\Delta_{i+1}) < e(\Delta_i)$ and  $b(\Delta_{i+1}) < b(\Delta_i)$.
\end{defi}

\begin{ex}
All Speh representations are particular cases of ladder representations.

The representations corresponding to the multisegments
\[
\begin{tikzpicture}[scale=0.7]
\node[anchor=east, scale=0.8] at (-0.5,1) {$[2;5]+[1;3]+[0;0]=$};
\node[label={[yshift=-4pt]0}] at (0,0) {$\bullet$};
\node[label={[yshift=-4pt]1}] at (1,0.5) {};
\node[label={[yshift=-4pt]2}] at (2,1) {};
\node[label={[yshift=-4pt]5}] at (5,1) {};
\node[label={[below, yshift=-4pt]3}] at (3,0.5) {};
\draw[ultra thick] (1,0.5) to (3,0.5);
\draw[ultra thick] (2,1) to (5,1);

\node at (5.5,1) {,};

\node[anchor=east,, scale=0.8] at (9.5,1) {$[4;7]+[1;2]=$};
\node[label={[yshift=-4pt]4}] at (13,1.5) {};
\node[label={[yshift=-4pt]7}] at (16,1.5) {};
\node[label={[yshift=-4pt]2}] at (11,1) {};
\node[label={[yshift=-4pt]1}] at (10,1) {};

\draw[ultra thick] (13,1.5) to (16,1.5);
\draw[ultra thick] (11,1) to (10,1);
\end{tikzpicture}
\]
are ladder representations.
\end{ex}

\begin{defi}
The irreducible representation $Z(\m)$ is a \emph{regular} representation if, for all $1\leq i\neq j\leq N$, $e(\Delta_{j}) \neq e(\Delta_i)$ and $b(\Delta_{j}) \neq b(\Delta_i)$.
By extension, the multisegment $\m$ is also called \emph{regular}.
\end{defi}

\begin{ex}
All ladder representations are particular cases of regular representations.

The representation
\[
\begin{tikzpicture}[scale=0.8]
\node[anchor=east, scale=0.8] at (-0.5,1) {$Z([1;5]+[0;4]+[2;3])=$};
\node[label={[yshift=-4pt]1}] at (1,1) {};
\node[label={[yshift=-4pt]5}] at (5,1) {};
\node[label={[yshift=-4pt]0}] at (0,0.5) {};
\node[label={[below, yshift=-4pt]4}] at (4,0.5) {};
\node[label={[below, yshift=-4pt]2}] at (2,0) {};
\node[label={[below, yshift=-4pt]3}] at (3,0) {};
\draw[ultra thick] (2,0) to (3,0);
\draw[ultra thick] (0,0.5) to (4,0.5);
\draw[ultra thick] (1,1) to (5,1);
\end{tikzpicture}
\]
is a regular representation.
\end{ex}

\begin{defi}
If $Z(\m)$ is a regular representation, then one can define a corresponding permutation $\sigma_\m $ as follows. Write $\m = [a_1;b_1] + [a_2;b_2] +\cdots +[a_N;b_N] $, and assume $b_1 > b_2 > \cdots > b_N$, then $\sigma_\m \in\mathfrak{S}_N$ is such that
\[
a_{\sigma_\m(1)} < a_{\sigma_\m(2)} < \cdots  < a_{\sigma_\m(N)}.
\]
\end{defi}

\begin{rem}
If $Z(\m)$ is a ladder representation, then the associated permutation is $w_0$, the longest element of $\mathfrak{S}_N$.
\end{rem}

\begin{defi}
An irreducible representation $\pi$ is said to be \emph{real} if $\pi\times \pi$ is also irreducible.
\end{defi}

\begin{rem}
Real representations are usually called \emph{square-irreducible} representations in this context, but we use \emph{real} here, which is the terminology coming from the work of Kang-Kashiwara-Kim-Oh \citep{KKKO15}  on representations of quantum affine algebras, where the notion appeared in a crucial way (see Section~\ref{sect_cluster}).
\end{rem}

The following is one of the main results of \citep{LM18}.

\begin{theo}\label{theo_pat}
The regular representation $Z(\m)$ is real if and only if there does not exists a sequence $1\leq j_1 < \cdots < j_r  \leq N$, $r\geq 4$ such that if $a'_i=a_{j_i}$ and $b'_i=b_{j_i}$, then either
\begin{align*}
& a'_{i+1} < a'_{i}\leq  b'_{i+1} +1, i=3,\ldots, r-1, a'_3 < a'_1\leq  b'_3 +1,\text{ and } a'_r < a'_2 < a'_{r-1},\\
\text{or} \quad & a'_{i+1} < a'_{i}\leq  b'_{i+1} +1, i=4,\ldots, r-1, a'_4 < a'_2\leq  b'_4 +1,\text{ and } a'_3 < a'_r < a'_1 < a'_\ell, 
\end{align*}
where $\ell=2$ if $r=4$ and $\ell=r-1$ otherwise.
\end{theo}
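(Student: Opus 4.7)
The strategy is to combine the socle/cosocle irreducibility criterion of \cite{LM16} with an inductive analysis of $\m$. Applied to $Z(\m)\times Z(\m)$, this criterion converts realness into a combinatorial statement about the endpoints of the segments of $\m$ and the way they can interact across two copies. Unpacking the theorem for $r=4$ shows that the two forbidden configurations are precisely the permutation patterns $3412$ and $4231$ in $\sigma_\m$, subject to extra linking constraints of the form $a'_{i+1}\le b'_i+1$; the $r\ge 5$ cases describe how the same obstruction can be diluted along a longer chain of pairwise-linked segments.

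For the necessity direction (real $\Rightarrow$ no bad pattern), I would argue by contrapositive. Given a bad sequence $j_1<\cdots<j_r$, the chain condition $a'_{i+1}<a'_i\le b'_{i+1}+1$ at the intermediate indices says that consecutive segments of the sub-multisegment $\m'=\Delta_{j_1}+\cdots+\Delta_{j_r}$ are linked, which allows one to absorb the interior segments step by step and reduce to the base case $r=4$. On four segments arranged as a linked $3412$ or $4231$, one checks reducibility of $Z(\m')\times Z(\m')$ directly, by producing a non-scalar composition of elementary intertwiners or, equivalently, by exhibiting a proper subquotient via an explicit Jacquet-type derivative. Finally, reducibility transfers from $\m'$ back to $\m$ because the remaining segments of $\m$, by the minimality of the chosen pattern, decouple from the witness.

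For the sufficiency direction (no bad pattern $\Rightarrow$ real), I would induct on $N$. The base $N=1$ is immediate, since a single segment gives a Speh representation, which is known to be real. For the inductive step, select an extremal segment $\Delta\in\m$ in the spirit of the good segments of Section~\ref{sect_good}, such that $\m\setminus\Delta$ still avoids every bad pattern and such that the parabolic induction $Z(\m\setminus\Delta)\times Z(\Delta)$ has $Z(\m)$ as an irreducible constituent with multiplicity one. Combining the inductive hypothesis that $Z(\m\setminus\Delta)$ is real with the trivial fact that $Z(\Delta)$ is real, realness of $Z(\m)$ then becomes irreducibility of the fourfold product
\[
Z(\m\setminus\Delta)\times Z(\Delta)\times Z(\m\setminus\Delta)\times Z(\Delta),
\]
which is tested by the \cite{LM16} criterion against the absence of bad patterns between the two copies of $\m$.

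The main obstacle is the sufficiency direction, and specifically the interplay between two copies of $\m$: applying the socle/cosocle criterion requires a combinatorial description of $\soc(Z(\m)\times Z(\m))$ fine enough to detect exactly the forbidden patterns, and matching this description to the rather intricate list of disqualifying sequences in the theorem is the technical heart of the argument. A secondary but non-trivial difficulty is to guarantee that the good segment extracted in the inductive step preserves pattern avoidance for every $r\ge 4$, not just $r=4$, which requires delicate bookkeeping with the chained link conditions $a'_{i+1}\le b'_{i+1}+1$ at all intermediate indices.
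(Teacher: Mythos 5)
First, a point of order: the paper does not prove Theorem~\ref{theo_pat} at all. It is imported verbatim as ``one of the main results of \cite{LM18}'', and the only related argument given internally (Remark~\ref{rem_avoid_real}) recovers the single implication that avoidance of the patterns $4231$ and $3412$ implies realness --- a \emph{strictly stronger} hypothesis than the one in the theorem, since a bad sequence requires, beyond the pattern, the linking inequalities $a'_{i+1}<a'_i\leq b'_{i+1}+1$. So your attempt must stand on its own, and it has genuine gaps in both directions. In the necessity direction, the step ``reducibility transfers from $\m'$ back to $\m$ because the remaining segments decouple from the witness'' is precisely what needs proof and is not a general principle: there is no heredity lemma saying that non-realness of $Z(\m')$ for a sub-multisegment $\m'\subset\m$ forces non-realness of $Z(\m)$; in this setting such heredity is itself a \emph{consequence} of the theorem you are trying to prove, so invoking it is circular. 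Likewise, the proposed reduction from $r\geq 5$ to $r=4$ by ``absorbing the interior segments'' cannot work as stated: the bad sequences with $r\geq 5$ are not instances of the $r=4$ configuration (otherwise the theorem would only list $r=4$), so after absorption you land outside your base case; and even the base case itself (``a non-scalar composition of elementary intertwiners'' or ``an explicit Jacquet-type derivative'') is asserted rather than carried out.

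In the sufficiency direction, your induction requires, under the theorem's (weak) hypothesis, a segment $\Delta$ such that $\m\setminus\Delta$ still has no bad sequence and such that $Z(\m)$ embeds in one ordering of $Z(\Delta)\times Z(\m\setminus\Delta)$ and is a quotient of the other, with multiplicity one, so that Lemma~\ref{lem_irr} applies. This is exactly the good-segment machinery of Section~\ref{sect_good}, but the existence results there (Proposition~\ref{prop_existence}, Corollary~\ref{cor_2good}) are proved only under the stronger $4231$/$3412$ avoidance: their proofs produce the forbidden \emph{patterns}, not the forbidden \emph{linked sequences}. There exist regular multisegments satisfying the theorem's hypothesis whose permutation does contain $4231$ or $3412$ (the linking constraints failing), and for those your inductive step has no supply of good segments --- indeed none need exist. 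This gap is exactly why the paper proves only one implication internally and cites \cite{LM18} for the equivalence, where the full statement is established by substantially different (geometric) methods rather than by the socle/cosocle induction you outline.
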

If the permutation $\sigma_\m$ avoids the patterns $4231$ and $3412$, then the condition of Theorem~\ref{theo_pat} is satisfied. We will call these representations \emph{pattern avoiding regular}.

The same patterns avoidance condition correspond to the smoothness condition of the Schubert variety $X_{\sigma_\m}$ (see \citep{LS90}).

Note that in particular, all ladder representations are real.

\subsection{Irreducibility criteria}

The following result will be much used.

\begin{lem}\label{lem_irr}\citep{MSDuke14}
Let $\pi_1$ and $\pi_2$ be irreducible representations, and $\pi$ be a representation such that
\begin{enumerate}[(a)]
	\item $\pi$ is a subrepresentation of $\pi_1\times \pi_2$,
	\item $\pi$ is a quotient of $\pi_2\times \pi_1$,
	\item $\pi_1\otimes \pi_2$ has multiplicity 1 in the Jordan-Hölder sequence of $\pi_1\times \pi_2$,
\end{enumerate}
Then $\pi$ is irreducible.
\end{lem}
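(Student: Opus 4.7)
The plan is to find a distinguished irreducible representation $\pi_0$ that is simultaneously the socle of $\pi_1\times\pi_2$ and the cosocle of $\pi_2\times\pi_1$ and occurs with multiplicity one in the Jordan-H\"older series of $\pi_1\times\pi_2$. Conditions (a) and (b) will then sandwich $\pi$ between $\pi_0$ and $\pi_0$, and the multiplicity bound will force $\pi=\pi_0$.

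To identify $\pi_0$, I would invoke Frobenius reciprocity: for any irreducible $\sigma$, the dimension of $\mathrm{Hom}_G(\sigma,\pi_1\times\pi_2)$ equals the dimension of $\mathrm{Hom}_M(r_P(\sigma),\pi_1\otimes\pi_2)$, where $P$ is the standard parabolic with Levi $M$. Summing the multiplicity of $\pi_1\otimes\pi_2$ along a composition series of $\pi_1\times\pi_2$ and applying condition (c), exactly one irreducible subquotient of $\pi_1\times\pi_2$ has $\pi_1\otimes\pi_2$ as a Jordan-H\"older constituent of its Jacquet module, and with multiplicity one. This characterizes $\pi_0:=\soc(\pi_1\times\pi_2)$, which is irreducible and appears with multiplicity one in the composition series of $\pi_1\times\pi_2$. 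A symmetric argument, using second adjointness and the analogous multiplicity-one statement after swapping $\pi_1$ and $\pi_2$, identifies $\cs(\pi_2\times\pi_1)$ as the same $\pi_0$. Conditions (a) and (b) now give $\soc(\pi)=\pi_0=\cs(\pi)$, since every nonzero subrepresentation (resp.\ quotient) of $\pi$ is a subrepresentation (resp.\ quotient) of $\pi_1\times\pi_2$ (resp.\ $\pi_2\times\pi_1$).

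To conclude, consider the surjection $\pi\twoheadrightarrow\cs(\pi)=\pi_0$ with kernel $K$. If $K\neq 0$, then since $\soc(\pi)=\pi_0$ is irreducible, every nonzero subrepresentation of $\pi$ contains $\pi_0$, so $\pi_0\hookrightarrow K$. This places two copies of $\pi_0$ in the composition series of $\pi$, and hence of $\pi_1\times\pi_2$, contradicting the multiplicity-one property established above. Therefore $K=0$ and $\pi\cong\pi_0$ is irreducible. The hard part will be identifying $\soc(\pi_1\times\pi_2)$ and $\cs(\pi_2\times\pi_1)$ as the \emph{same} irreducible $\pi_0$; this requires careful bookkeeping of how $\pi_1\otimes\pi_2$ and $\pi_2\otimes\pi_1$ appear in the relevant Jacquet modules via the geometric lemma, together with the interplay between Frobenius reciprocity and second adjointness under passage to the opposite parabolic.
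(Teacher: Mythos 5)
First, be aware that the paper contains no proof of this lemma: it is imported verbatim from the cited reference, so your attempt can only be measured against the standard argument in that literature. Your overall architecture is sound and close to that argument: reading hypothesis (c) as a multiplicity-one statement for $\pi_1\otimes\pi_2$ in the Jacquet module of $\pi_1\times\pi_2$ (the only reading under which (c) typechecks, since $\pi_1\otimes\pi_2$ is a representation of the Levi), using Frobenius reciprocity to see that every irreducible subrepresentation of $\pi_1\times\pi_2$ has $\pi_1\otimes\pi_2$ as a quotient of its Jacquet module, concluding by exactness of the Jacquet functor and (c) that $\soc(\pi_1\times\pi_2)$ is irreducible and occurs once in the composition series, and then running the sandwich argument on $\pi$. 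The closing step (a nonzero kernel $K$ of $\pi\twoheadrightarrow\pi_0$ would contain $\soc(K)=\pi_0$, giving two copies of $\pi_0$ in $\mathrm{JH}(\pi_1\times\pi_2)$) is correct.

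The gap is in the step you yourself flag as the hard part, and your description of how to fill it is not right as stated: you appeal to ``the analogous multiplicity-one statement after swapping $\pi_1$ and $\pi_2$'', but that swapped statement (multiplicity one of $\pi_2\otimes\pi_1$ in the Jacquet module of $\pi_2\times\pi_1$) is not among the hypotheses --- only (c) is. What your argument actually needs is multiplicity one of $\pi_1\otimes\pi_2$, for the \emph{same} parabolic as in (c), in the Jacquet module of $\pi_2\times\pi_1$: indeed, second adjointness applied to $\pi_2\times\pi_1\twoheadrightarrow\tau$, after conjugating the opposite parabolic by the block-swap Weyl element, yields an embedding of $\pi_1\otimes\pi_2$ (not $\pi_2\otimes\pi_1$) into the Jacquet module of $\tau$. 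This needed statement does follow from (c), because $\pi_1\times\pi_2$ and $\pi_2\times\pi_1$ have the same Jordan--H\"older factors (they are equal in the Grothendieck group) and the Jacquet functor is exact; without citing this fact, $\cs(\pi_2\times\pi_1)$ cannot be compared with $\soc(\pi_1\times\pi_2)$ at all. Alternatively --- and this is the more economical route, essentially the one in the literature --- you can bypass the identification $\soc(\pi_1\times\pi_2)=\cs(\pi_2\times\pi_1)$ entirely: if $\pi$ were reducible, pick an irreducible quotient $\tau$ of $\pi$ and an irreducible subrepresentation $\sigma$ of $\ker(\pi\twoheadrightarrow\tau)$; by (a) and Frobenius reciprocity $\pi_1\otimes\pi_2$ occurs in the Jordan--H\"older sequence of the Jacquet module of $\sigma$, by (b) and second adjointness (with the Weyl swap) it occurs in that of $\tau$, and since $\sigma$ and $\tau$ are distinct occurrences in $\mathrm{JH}(\pi)$ while the Jacquet module of $\pi$ embeds in that of $\pi_1\times\pi_2$, this gives multiplicity at least two and contradicts (c) directly, with no Grothendieck-group input needed.
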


\begin{defi}
Given $\pi_1=Z(\m_1)$ and  $\pi_2=Z(\m_2)$, we write $\LI(\pi_1,\pi_2)$ (resp. $\RI(\pi_1,\pi_2)$) for the condition 
\[Z(\m_1 + \m_2) = \soc(\pi_1\times \pi_2)\]
(resp. 
\[Z(\m_1 + \m_2) = \cs(\pi_1\times \pi_2)).\]
\end{defi}

\begin{lem}\label{lem_equi_LI_RI}
Let $\m$ be a multisegment and $\Delta$ a segment. Then we have the following equivalences:
\begin{align*}
\LI(Z(\Delta),Z(\m)) & \quad \Longleftrightarrow \quad Z(\m+\Delta) \hookrightarrow Z(\Delta)\times Z(\m),\\
\RI(Z(\Delta),Z(\m)) & \quad \Longleftrightarrow \quad Z(\m+\Delta) \hookrightarrow Z(\m) \times Z(\Delta).
\end{align*}
\end{lem}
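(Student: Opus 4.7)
The plan is to prove both equivalences by combining two results from the Lapid--M\'{\i}nguez theory: the irreducibility of both $\soc(\pi_1 \times \pi_2)$ and $\cs(\pi_1 \times \pi_2)$ for arbitrary irreducibles $\pi_1, \pi_2 \in \Irr$ (a theorem from \cite{LM16}), and the canonical isomorphism
\[\cs(\pi_1 \times \pi_2) \cong \soc(\pi_2 \times \pi_1)\]
coming from the standard intertwining operator $\pi_1 \times \pi_2 \to \pi_2 \times \pi_1$ (whose image, under the irreducibility above, is simultaneously the cosocle of the source and the socle of the target).

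For the first equivalence, the forward direction is immediate: the identity $Z(\m + \Delta) = \soc(Z(\Delta) \times Z(\m))$ already exhibits $Z(\m+\Delta)$ as a subrepresentation of $Z(\Delta) \times Z(\m)$. Conversely, if $Z(\m + \Delta) \hookrightarrow Z(\Delta) \times Z(\m)$, the embedded copy is contained in $\soc(Z(\Delta) \times Z(\m))$, and by irreducibility of this socle the inclusion must be an equality, which gives $\LI(Z(\Delta), Z(\m))$.

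The second equivalence is obtained by exactly the same mechanism after applying the intertwining isomorphism. If $\RI(Z(\Delta), Z(\m))$ holds, then $Z(\m+\Delta)$ is the cosocle of $Z(\Delta) \times Z(\m)$, which by the isomorphism above equals the socle of $Z(\m) \times Z(\Delta)$; in particular $Z(\m+\Delta) \hookrightarrow Z(\m) \times Z(\Delta)$. Conversely, if $Z(\m+\Delta) \hookrightarrow Z(\m) \times Z(\Delta)$, then $Z(\m+\Delta) \subseteq \soc(Z(\m) \times Z(\Delta))$; by irreducibility of that socle the inclusion is an equality, and translating back through the intertwiner gives $Z(\m+\Delta) = \cs(Z(\Delta) \times Z(\m))$, i.e. $\RI(Z(\Delta), Z(\m))$.

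The proof is essentially formal once the two input facts are granted, so the main obstacle is simply invoking the right external results. The crucial step is the socle-cosocle swap under reversal of the factors, without which the $\RI$ side of the statement could not be brought into the same framework as the $\LI$ side, since the two embedding conditions live in representations with opposite orders.
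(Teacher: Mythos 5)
Your overall strategy (reduce both equivalences to irreducibility of the relevant socle, and transport the $\RI$ condition across the swap of the two factors) is structurally close to the paper's proof, but your first input fact is not a theorem and is in fact false in general: for arbitrary irreducible $\pi_1,\pi_2\in\Irr$, the socle and cosocle of $\pi_1\times\pi_2$ need \emph{not} be irreducible. This failure is precisely why Lapid--M\'{\i}nguez introduce the notions of SI representations and of left/right \emph{multipliers} in \cite{LM16}, and why the Kang--Kashiwara--Kim--Oh theorem \citep{KKKO15} on simplicity of socles and heads of tensor products requires one of the factors to be real; if your claim (attributed to \cite{LM16}) were true, those notions and hypotheses would be vacuous. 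What is true, and what the lemma actually needs, is the special case in which one factor is a segment representation: $Z(\Delta)$ is a left multiplier \citep[Definition 4.3]{LM16}, so $Z(\Delta)\times Z(\m)$ has a unique irreducible submodule (and, by duality, $Z(\m)\times Z(\Delta)$ does as well). With that substitution your argument for the first equivalence, and the skeleton of the second, goes through --- this is exactly the paper's proof. As written, however, the proof rests on a false general claim.

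A second, related gap is your justification of the swap $\cs(\pi_1\times\pi_2)\cong\soc(\pi_2\times\pi_1)$ via the image of the standard intertwining operator. Even granting irreducibility of $\cs(\pi_1\times\pi_2)$ and of $\soc(\pi_2\times\pi_1)$, a nonzero intertwiner $R:\pi_1\times\pi_2\to\pi_2\times\pi_1$ only has image which is a quotient of the source and a submodule of the target; irreducibility then gives that this image surjects onto $\cs(\pi_1\times\pi_2)$ and contains $\soc(\pi_2\times\pi_1)$, not that it equals either one --- that identification is again a statement of KKKO type, valid under reality or multiplier hypotheses, not formally a consequence of what you assumed. The swap you need is nevertheless true, but the unconditional route is duality: the contragredient interchanges socles and cosocles, and composing with the Gelfand--Kazhdan involution $g\mapsto{}^{t}g^{-1}$ reverses the order of parabolic induction; this is \citep[Lemma 3.9]{LM16}, which is precisely what the paper invokes to deduce the $\RI$ equivalence from the $\LI$ one.
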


\begin{proof}
The first statement follows from the fact that the segment representation $Z(\Delta)$ is a \emph{left multiplier} (see \citep[Definition 4.3]{LM16}), thus $Z(\Delta)\times Z(\m)$ has a unique irreducible submodule, which appears with multiplicity 1 in the Jordan-Hölder sequence of $Z(\Delta)\times Z(\m)$.

The second statement can be deduced from the first by the use of the contragredient, or more precisely \citep[Lemma 3.9]{LM16}.
\end{proof}

\begin{prop}\label{prop_irred_LI_RI}\citep{LM16}
$\pi_1\times\pi_2$ is irreducible if and only if $\LI(\pi_1,\pi_2)$ and $\RI(\pi_1,\pi_2)$.
\end{prop}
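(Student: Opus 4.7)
The plan is to reduce both implications to the single fact that $Z(\m_1+\m_2)$ occurs with multiplicity exactly one in the Jordan--H\"older series of $\pi_1\times\pi_2$, all other constituents being of the form $Z(\m')$ with $\m'<\m_1+\m_2$ in the Zelevinsky order. This follows from the Zelevinsky classification together with commutativity of parabolic induction at the level of the Grothendieck ring: one has $[\zeta(\m_1)\times\zeta(\m_2)]=[\zeta(\m_1+\m_2)]$, and expanding $[\pi_1\times\pi_2]=[Z(\m_1)][Z(\m_2)]$ via the unitriangular change of basis between standards and irreducibles gives
\[
[\pi_1\times\pi_2]\;=\;[Z(\m_1+\m_2)]\;+\;\sum_{\m'<\m_1+\m_2}c_{\m'}\,[Z(\m')].
\]

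For $(\Rightarrow)$, if $\pi_1\times\pi_2$ is irreducible then it coincides with its unique Jordan--H\"older constituent, which by the expansion above must be $Z(\m_1+\m_2)$. Hence $\soc(\pi_1\times\pi_2)=\cs(\pi_1\times\pi_2)=Z(\m_1+\m_2)$, yielding both $\LI(\pi_1,\pi_2)$ and $\RI(\pi_1,\pi_2)$.

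For $(\Leftarrow)$, set $V=\pi_1\times\pi_2$ and $Z=Z(\m_1+\m_2)$, and suppose $\soc(V)=\cs(V)=Z$. If $V$ were reducible, a composition series $0=V_0\subsetneq V_1\subsetneq\cdots\subsetneq V_n=V$ would have $n\geq 2$. The simple submodule $V_1$ is contained in $\soc(V)=Z$ and is itself simple, forcing $V_1=Z$; dually, the simple quotient $V/V_{n-1}$ is a quotient of $\cs(V)=Z$, and so is equal to $Z$. These are two distinct composition factors isomorphic to $Z$, contradicting the multiplicity-one statement. Therefore $V$ is irreducible.

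The main obstacle is the multiplicity-one assertion itself, whose precise justification relies on the unitriangularity of the standard-to-irreducible transition matrix; once it is granted, the rest of the proof is a short manipulation with socles and cosocles, and this is essentially the argument used by Lapid--M\'inguez~\cite{LM16}.
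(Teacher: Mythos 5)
Your proof is correct, and since the paper itself gives no argument for this proposition (it is quoted directly from Lapid--M\'inguez \cite{LM16}), what you have written is in effect a reconstruction of the standard proof behind that citation: everything reduces to the fact that $Z(\m_1+\m_2)$ occurs with multiplicity one in the Jordan--H\"older series of $Z(\m_1)\times Z(\m_2)$, and then the socle/cosocle bookkeeping in both directions is exactly as you describe. The one point you should make explicit is the step hidden inside your multiplicity-one derivation: after expanding $[Z(\m_1)][Z(\m_2)]$ in the standard basis via unitriangularity, you need that $\n_1\geq \m_1$, $\n_2\geq \m_2$ and $\n_1+\n_2=\m_1+\m_2$ together force $\n_1=\m_1$ and $\n_2=\m_2$, i.e.\ that the Zelevinsky order is \emph{strictly} compatible with addition of multisegments. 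This is true (it follows from Zelevinsky's characterization of the order by additive combinatorial invariants), but unitriangularity alone does not deliver it, so it deserves a sentence rather than being absorbed into the phrase ``all other constituents satisfy $\m'<\m_1+\m_2$.''
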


In \citep{LM16}, Lapid-Minguez introduced a combinatorial setup in order to determine whether the conditions $\RI(Z(\Delta),Z(\m))$ and $\LI(Z(\Delta),Z(\m))$ where satisfied, for $\Delta\in\Seg$ and $\m\in \Mult$. Let us recall it here.

Write $\m=\Delta_1 + \cdots + \Delta_N$, and consider the sets
\begin{align*}
X_{\Delta,\m}  = \left\lbrace i \mid \Delta \prec \Delta_i \right\rbrace, \quad & \tilde{X}_{\Delta,\m}  = \left\lbrace i \mid \Delta_i \prec \Delta \right\rbrace,\\
Y_{\Delta,\m}  = \left\lbrace i \mid \overleftarrow{\Delta} \prec \Delta_i \right\rbrace, \quad & \tilde{Y}_{\Delta,\m}  = \left\lbrace i \mid \overleftarrow{\Delta}_i \prec \Delta \right\rbrace.
\end{align*}

\begin{defi}\label{def_LC_RC}
Let $\LC(\Delta,\m)$ be the condition that there exists an injective function $f : X_{\Delta,\m} \to Y_{\Delta,\m}$ such that for all $1\leq i\leq N$, $\Delta_{f(i)}\prec \Delta_i$.

Let $\RC(\Delta,\m)$ be the condition that there exists an injective function $f : \tilde{X}_{\Delta,\m} \to \tilde{Y}_{\Delta,\m}$ such that for all $1\leq i\leq N$, $\Delta_i\prec \Delta_{f(i)}$.
\end{defi}

The function of Definition \ref{def_LC_RC} are called \emph{matching functions}.

\begin{prop}\citep{LM16}
The conditions $\LC(\Delta,\m)$ and $\LI(Z(\Delta),Z(\m))$ (resp. $\RC(\Delta,\m)$ and $\RI(Z(\Delta),Z(\m))$) are equivalent.
\end{prop}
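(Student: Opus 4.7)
The plan is to prove the equivalence $\LC(\Delta,\m) \Longleftrightarrow \LI(Z(\Delta),Z(\m))$; the $\RC/\RI$ statement then follows by contragredient duality, exactly as in the proof of Lemma~\ref{lem_equi_LI_RI} (swapping socles and cosocles and reversing the order of parabolic induction). By Lemma~\ref{lem_equi_LI_RI}, $\LI(Z(\Delta),Z(\m))$ is equivalent to the existence of an embedding $Z(\m+\Delta) \hookrightarrow Z(\Delta) \times Z(\m)$, so the task is to translate this embedding question into the combinatorial matching condition $\LC$.

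I would proceed by induction on the number $N$ of segments of $\m$. The base case $N=1$ reduces to Proposition~\ref{prop_notprime} combined with a direct analysis of which of the two irreducible subquotients of $Z(\Delta)\times Z(\Delta_1)$ (when they are linked) is the socle: the asymmetric condition $\LC(\Delta,\Delta_1)$ distinguishes precisely the case where $Z(\m+\Delta)$ is the socle rather than merely the cosocle. For the inductive step, set $\rho_0=\rho\nu_\rho^{e(\Delta)}$, the cuspidal representation at the right endpoint of $\Delta$, and apply the Bernstein--Zelevinsky geometric lemma to compute the $\rho_0$-Jacquet module of $Z(\Delta)\times Z(\m)$. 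Because $Z(\Delta)$ is a left multiplier (as recalled in the proof of Lemma~\ref{lem_equi_LI_RI}), the socle of this induced representation is detected by one distinguished summand of the Jacquet module. Tracking this summand, the sets $X_{\Delta,\m}$ and $Y_{\Delta,\m}$ appear naturally: $X_{\Delta,\m}$ collects the segments of $\m$ that obstruct the embedding from proceeding directly, while $Y_{\Delta,\m}$ collects those which can absorb such an obstruction through one step of shortening. The inductive hypothesis then shows that $Z(\m+\Delta)\hookrightarrow Z(\Delta)\times Z(\m)$ is equivalent to the existence of an injective matching $f: X_{\Delta,\m}\to Y_{\Delta,\m}$ with $\Delta_{f(i)}\prec\Delta_i$, i.e.\ to $\LC(\Delta,\m)$; the existence or non-existence of $f$ at each reduction step is controlled by Hall's marriage theorem, which also furnishes explicit obstructions (minimal blocking subsets of $X_{\Delta,\m}$) when $\LC$ fails.

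The main obstacle is the bookkeeping when several segments of $\m$ share endpoints or are nested. There the geometric lemma produces multiple candidate summands in the Jacquet module, and one must identify which summand carries the $Z(\m+\Delta)$-isotypic contribution. Fixing an ordered form of $\m$ renders the geometric lemma essentially upper-triangular and tames the combinatorics, but verifying that the matching condition is preserved after each reduction -- in particular that the shortening of a segment indexed by $Y_{\Delta,\m}$ does not accidentally destroy $\LC$ at the next level, and that the matching function can always be re-chosen after re-ordering the resulting multisegment -- is the core technical content that makes the argument of \citep{LM16} delicate.
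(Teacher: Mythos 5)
This proposition is not proved in the paper at all: it is imported verbatim from \citep{LM16}, so there is no internal argument to compare against. Judged on its own merits, your proposal is a roadmap rather than a proof, and the gap sits exactly where the theorem lives. The decisive step --- ``the inductive hypothesis then shows that $Z(\m+\Delta)\hookrightarrow Z(\Delta)\times Z(\m)$ is equivalent to the existence of an injective matching $f\colon X_{\Delta,\m}\to Y_{\Delta,\m}$'' --- is simply the statement being proved, asserted at one smaller value of $N$ plus an unproved claim that the two sides reduce in parallel. For this to be an induction you would need two precise reduction lemmas: (i) a representation-theoretic one identifying $\soc(Z(\Delta)\times Z(\m))$ with the socle of a smaller induced representation (which distinguished Jacquet-module summand carries it, and why that summand is multiplicity-free enough to detect the socle), and (ii) a combinatorial one showing $\LC(\Delta,\m)$ holds if and only if $\LC$ holds for the reduced data. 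Neither is formulated, let alone proved; the description of $X_{\Delta,\m}$ as ``obstructions'' and $Y_{\Delta,\m}$ as ``absorbers'' is a heuristic gloss, not a statement the geometric lemma hands you. Your final paragraph concedes this: the re-ordering and preservation-of-matching issues you flag as ``the core technical content'' are precisely what is missing, so the argument does not close.

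Two further cautions. First, the base case is also not as immediate as claimed: for a single segment one must actually compute which of the two composition factors of $Z(\Delta)\times Z(\Delta_1)$ is the socle (this depends on the order of induction, since $Z(\Delta+\Delta_1)$ is by definition the socle of the \emph{ordered} product $\zeta(\Delta+\Delta_1)$), and then check that the asymmetry in the definitions of $X_{\Delta,\m}$, $Y_{\Delta,\m}$ (via $\overleftarrow{\Delta}$) reproduces exactly that dichotomy; this is a finite verification but it is part of the proof, not a remark. Second, invoking Hall's marriage theorem is fine as a criterion for the existence of the matching $f$, but it does not ``control the existence or non-existence of $f$ at each reduction step'' unless you first prove that the relevant bipartite graph transforms predictably under your (unspecified) reduction --- which is again point (ii) above. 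The actual proof in \citep{LM16} does not run by a naive induction on the number of segments; it requires a substantially more careful analysis, which is why the present paper cites it rather than reproving it.
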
\label{prop_LC_LI}

Combining this result with Proposition \ref{prop_irred_LI_RI}, we get the following.
\begin{cor}
The parabolic induction $Z(\Delta)\times Z(\m)$ is irreducible if and only if $\LC(\Delta,\m)$ and $\RC(\Delta,\m)$.
\end{cor}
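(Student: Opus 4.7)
The statement is a corollary obtained by directly combining the two preceding results, so the plan is essentially a one-line composition rather than a substantive argument. The approach is as follows. By Proposition~\ref{prop_irred_LI_RI}, applied to $\pi_1=Z(\Delta)$ and $\pi_2=Z(\m)$, the parabolic induction $Z(\Delta)\times Z(\m)$ is irreducible if and only if both $\LI(Z(\Delta),Z(\m))$ and $\RI(Z(\Delta),Z(\m))$ hold. By Proposition~\ref{prop_LC_LI}, these two conditions are respectively equivalent to $\LC(\Delta,\m)$ and $\RC(\Delta,\m)$. Substituting gives the claimed equivalence.

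There is no real obstacle here: the content of the corollary lies entirely in the two propositions it invokes, namely Lapid--M\'{i}nguez's criterion translating socle/cosocle coincidence into the combinatorial matching conditions, together with the general fact that irreducibility of a parabolic induction of two irreducibles is equivalent to $Z(\m_1+\m_2)$ being simultaneously the socle and cosocle of $\pi_1\times\pi_2$. I would simply state the two-step deduction, note that both directions follow because each equivalence is a biconditional, and close without further calculation. If anything required care, it would be making sure one applies Proposition~\ref{prop_LC_LI} in the correct direction for each of $\LI$ and $\RI$, but since the proposition asserts both equivalences in a single statement, this is not a genuine difficulty.
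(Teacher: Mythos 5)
Your proof is correct and is exactly the paper's argument: the paper introduces this corollary with the single line ``Combining this result with Proposition~\ref{prop_irred_LI_RI}, we get the following,'' which is precisely your two-step composition of Proposition~\ref{prop_irred_LI_RI} (irreducibility $\Leftrightarrow$ $\LI$ and $\RI$) with Proposition~\ref{prop_LC_LI} ($\LI \Leftrightarrow \LC$, $\RI \Leftrightarrow \RC$). Nothing further is needed.
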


\section{Good segments}\label{sect_good}

\subsection{Definition}

\begin{defi}
A segment $\Delta$ in a multisegment $\mathfrak{m}\in\Mult$ is called a \emph{good segment} if 
\begin{enumerate}[(i)]
	\item $Z(\Delta) \times Z(\mathfrak{m}) $ is irreducible.
	\item\label{cas2} $\left\lbrace \begin{array}{r}
	Z(\mathfrak{m}) \hookrightarrow Z(\Delta)\times Z(\mathfrak{m}^-),\\[1ex]
	\text{or } Z(\mathfrak{m}) \hookrightarrow Z(\mathfrak{m}^-)\times Z(\Delta),
	\end{array}\right.$, where $\mathfrak{m}^-=\mathfrak{m}\setminus \{\Delta\}$.
\end{enumerate}
	If the first (resp. second) subcase of (\ref{cas2}) is satisfied, $\Delta$ is called a \emph{good left} (resp. \emph{right}) segment of $\mathfrak{m}$.
\end{defi}

Using Lemma~\ref{lem_equi_LI_RI} as well as Proposition~\ref{prop_LC_LI}, we have the following equivalences:
\begin{align}
\Delta \text{ is a good \textbf{left} segment of } \m & \quad \Longleftrightarrow \quad \begin{array}{l}
 \LC(\Delta,\m)  ,   \RC(\Delta,\m),\\
\text{and }  \LC(\Delta,\m^-).
\end{array} ,\\
\Delta \text{ is a good \textbf{right} segment of } \m & \quad \Longleftrightarrow \quad \begin{array}{l}
 \LC(\Delta,\m)  ,   \RC(\Delta,\m),\\
\text{and }  \RC(\Delta,\m^-).
\end{array}
\end{align}

\subsection{Combinatorial criteria}\label{sect_comb}

\begin{lem}\label{lemLcRCvide}
If $\m=\Delta_1+\cdots+\Delta_N$ is a multisegment and $\Delta_0$ is a segment such that $\Delta_0+\m$ is a \emph{regular} multisegment, then 
\begin{align*}
\LC(\Delta_0,\m) & \Leftrightarrow \nexists i, \Delta_0 \prec \Delta_i,\\
\RC(\Delta_0,\m) & \Leftrightarrow \nexists i , \Delta_i \prec \Delta_0.
\end{align*}
\end{lem}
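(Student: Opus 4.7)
The proof will hinge on one structural observation together with a short extremality argument. First, under the hypothesis that $\Delta_0 + \m$ is regular, all left endpoints (and all right endpoints) in $\Delta_0 + \m$ are pairwise distinct. Unpacking the linkage definitions, this lets me rewrite
\[
X_{\Delta_0,\m} = \{i : a_0 < a_i \leq b_0 + 1 \text{ and } b_0 < b_i\},
\qquad
Y_{\Delta_0,\m} = \{i : a_0 < a_i \leq b_0 \text{ and } b_0 < b_i\},
\]
where $\overleftarrow{\Delta_0} \prec \Delta_i$ a priori only gives $a_0 \leq a_i$ and $b_0 \leq b_i$, but distinctness promotes these to strict inequalities. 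The first key consequence is the inclusion $Y_{\Delta_0,\m} \subset X_{\Delta_0,\m}$. A dual computation gives $\tilde{Y}_{\Delta_0,\m} \subset \tilde{X}_{\Delta_0,\m}$. This inclusion is exactly where regularity is used.

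The backward implications are immediate: if $X_{\Delta_0,\m} = \emptyset$ then the empty function is a valid matching function, so $\LC(\Delta_0,\m)$ holds, and similarly for $\RC$.

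For the forward direction of the first equivalence, I argue by contradiction. Assume $\LC(\Delta_0,\m)$ holds via a matching function $f : X_{\Delta_0,\m} \to Y_{\Delta_0,\m}$ and assume $X_{\Delta_0,\m} \neq \emptyset$. Choose $i^\ast \in X_{\Delta_0,\m}$ minimising $a_{i^\ast}$ among all elements of $X_{\Delta_0,\m}$, and set $j = f(i^\ast)$. By the inclusion $Y_{\Delta_0,\m} \subset X_{\Delta_0,\m}$ we have $j \in X_{\Delta_0,\m}$, so by minimality $a_j \geq a_{i^\ast}$. On the other hand the defining property of a matching function forces $\Delta_j \prec \Delta_{i^\ast}$, which in particular gives $a_j < a_{i^\ast}$, a contradiction.

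The forward direction of the second equivalence is the dual argument: assuming $\RC(\Delta_0,\m)$ via $f : \tilde{X}_{\Delta_0,\m} \to \tilde{Y}_{\Delta_0,\m}$ with $\tilde{X}_{\Delta_0,\m} \neq \emptyset$, pick $i^\ast \in \tilde{X}_{\Delta_0,\m}$ maximising $b_{i^\ast}$; then $f(i^\ast) \in \tilde{Y}_{\Delta_0,\m} \subset \tilde{X}_{\Delta_0,\m}$ and $\Delta_{i^\ast} \prec \Delta_{f(i^\ast)}$ forces $b_{f(i^\ast)} > b_{i^\ast}$, contradicting maximality. There is no real obstacle here; the only delicate point is making sure that under regularity $Y \subset X$ (and $\tilde{Y} \subset \tilde{X}$), so that the extremal element of $X$ (resp.\ $\tilde{X}$) remains in $X$ (resp.\ $\tilde{X}$) after being pulled back through $f$.
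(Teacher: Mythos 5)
Your proof is correct and follows essentially the same route as the paper's: both hinge on the observation that regularity forces $Y_{\Delta_0,\m} \subset X_{\Delta_0,\m}$ (resp. $\tilde{Y}_{\Delta_0,\m} \subset \tilde{X}_{\Delta_0,\m}$), after which the backward implications are trivial and no matching function can exist once $X_{\Delta_0,\m} \neq \emptyset$. The only difference is in the finishing step: where the paper appeals to Hall's marriage theorem, you give a direct extremality argument (pulling the minimal-left-endpoint element of $X_{\Delta_0,\m}$ back through $f$), which is self-contained and, as a small bonus, never uses injectivity of $f$.
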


\begin{proof}
We will prove the first equivalence, the second being exactly analog. From the definition of the condition $\LC$, the implication
\[ \LC(\Delta_0,\m)  \Leftarrow \nexists i, \Delta_0 \prec \Delta_i
\] 
is clear.

Let $i\in Y_{\Delta_0,\m}$. If $i\notin X_{\Delta_0,\m}$, then either $b(\Delta_0)=b(\Delta_i)$ or $e(\Delta_0)=e(\Delta_i)$, which is a contradiction. Thus $Y_{\Delta_0,\m}\subset X_{\Delta,\m}$.
Now, if $X_{\Delta_0,\m} \neq \emptyset$, then $\LC(\Delta_0,\m)$ can not be satisfied (by Hall's marriage theorem).
\end{proof}

\begin{lem}\label{lemLCRCdecr}
If $\m=\Delta_1+\cdots + \Delta_N$ is an ordered regular multisegment with $\sigma =\sigma_\m$ the associated permutation, then for all $1\leq i\leq N$,
\begin{itemize}
	\item[•] the condition $\LC(\Delta_i,\m)$ is equivalent to $\sigma^{-1}$ is strictly decreasing on $X_{\Delta_i,\m}$,
	\item[•] the condition $\RC(\Delta_i,\m)$ is equivalent to $\sigma^{-1}$ is strictly decreasing on $\tilde{X}_{\Delta_i,\m}$.
\end{itemize}
\end{lem}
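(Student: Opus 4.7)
My plan is to reformulate each condition in combinatorial terms and apply Hall's marriage theorem. The crucial observation is that inside $X:=X_{\Delta_i,\m}$ the segment precedence simplifies: for $j_1<j_2$ both in $X$ one automatically has $b_{j_1}>b_{j_2}>b_i$ and $a_{j_1},a_{j_2}\in(a_i,b_i+1]$, so the relation $\Delta_{j_2}\prec\Delta_{j_1}$ collapses to the single inequality $a_{j_2}<a_{j_1}$ (the condition $a_{j_1}-1\leq b_{j_2}$ being automatic from $b_{j_2}>b_i\geq a_{j_1}-1$). In particular $\prec$ restricts to a genuine partial order on $X$, coinciding with the componentwise order on the pairs $(a_j,b_j)$, and ``$\sigma^{-1}$ is strictly decreasing on $X$'' is equivalent to ``$(X,\prec)$ is a chain''. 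The same reformulation for $\tilde X_{\Delta_i,\m}$ works by swapping $\prec$ with its reverse.

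For the implication chain $\Rightarrow \LC(\Delta_i,\m)$, enumerate $X=\{j_1<j_2<\cdots<j_p\}$. Note that $i\in Y_{\Delta_i,\m}$ because $\overleftarrow{\Delta_i}\prec\Delta_i$ always holds, and that for $s<p$ the value $a_{j_{s+1}}$ is at most $b_i$ (strict decrease of $a$ along the chain starting from $a_{j_1}\leq b_i+1$), so $j_{s+1}\in Y_{\Delta_i,\m}$ as well. Then the matching $f(j_s):=j_{s+1}$ for $s<p$ and $f(j_p):=i$ is injective, and the required precedences $\Delta_{f(j_s)}\prec\Delta_{j_s}$ follow from the chain structure for $s<p$ and from $j_p\in X_{\Delta_i,\m}$ for $s=p$.

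For the converse I would argue contrapositively. Suppose $(X,\prec)$ is not a chain and pick an incomparable pair $m_1,m_2\in X$. Let $T\subseteq X$ be the down-closure of $\{m_1,m_2\}$ under $\prec$; then $m_1,m_2$ are the only maximal elements of $T$, and by transitivity of $\prec$ on $X$ the set $T$ is itself down-closed in $X$. Using Step~1 and regularity of $\m$ to pass between $X$ and $Y_{\Delta_i,\m}$, one identifies the Hall neighbourhood $N(T):=\{\ell\in Y_{\Delta_i,\m}\mid\exists m\in T,\ \Delta_\ell\prec\Delta_m\}$ with $\{i\}\cup D$, where $D\subseteq X$ is the set of strict predecessors (under $\prec$) in $X$ of some element of $T$. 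Down-closure of $T$ gives $D\subseteq T\setminus\{m_1,m_2\}$, hence $|N(T)|\leq |T|-1$, and Hall's condition fails: no injective matching $X\to Y_{\Delta_i,\m}$ with the desired precedence can exist, so $\LC(\Delta_i,\m)$ fails. The $\RC$ statement is obtained by a strictly dual argument on $\tilde X_{\Delta_i,\m}$ and $\tilde Y_{\Delta_i,\m}$. The main technical delicacy is the identification $N(T)\setminus\{i\}=D$: one uses regularity to check both that every $\ell\in Y_{\Delta_i,\m}$ arising in $N(T)$ automatically lies in $X$ (because $a_\ell<a_m\leq b_i+1$ forces $a_\ell\leq b_i$) and conversely that elements of $D$ all land in $Y_{\Delta_i,\m}$ rather than only in $X\setminus Y_{\Delta_i,\m}$.
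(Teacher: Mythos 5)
Your proof is correct. Its first half is essentially the paper's argument: the observation that on $X_{\Delta_i,\m}$ the linking inequality is automatic, so $\prec$ collapses to the componentwise order, and the explicit matching sending each element of $X_{\Delta_i,\m}$ to the one with next smaller right endpoint (the smallest being sent to $i$) is exactly the function \eqref{fmatch} of the paper; you even streamline this direction by checking directly that each image lies in $Y_{\Delta_i,\m}$, where the paper instead splits into the cases $Y_{\Delta_i,\m}=X_{\Delta_i,\m}\cup\{i\}$ and $Y_{\Delta_i,\m}\subsetneq X_{\Delta_i,\m}\cup\{i\}$. The genuine difference is in the converse. The paper assumes $\LC(\Delta_i,\m)$ and shows by a forced recursion that any matching function must coincide with \eqref{fmatch} (the element of smallest right endpoint can only be matched to $i$, the next only to that one, and so on), which yields the precedences and hence the decrease of $\sigma^{-1}$. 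You argue contrapositively: from an incomparable pair $m_1,m_2$ you form the down-set $T$, identify its Hall neighbourhood as $\{i\}\cup D$ with $D\subseteq T\setminus\{m_1,m_2\}$, and conclude $|N(T)|\leq |T|-1$, so no injective matching exists. Both arguments rest on the same structural facts (regularity gives $Y_{\Delta_i,\m}\subseteq X_{\Delta_i,\m}\cup\{i\}$, plus the collapse of $\prec$ on $X_{\Delta_i,\m}$); your Hall-violation argument generalizes the counting idea the paper already uses in the proof of Lemma~\ref{lemLcRCvide} and is a bit more robust, while the paper's forcing argument yields slightly more information, namely the uniqueness of the matching function when it exists.
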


\begin{proof}
As before, we only prove the first statement. Fix $1\leq i\leq N$. If $X_{\Delta_i,\m} = \emptyset$, the equivalence is trivial. 

Suppose $X_{\Delta_i,\m} \neq \emptyset$, then with the same reasoning as in the proof of Lemma~\ref{lemLcRCvide},
\[Y_{\Delta_i,\m} \subset X_{\Delta_i,\m} \cup \{i\}.\]

Suppose $Y_{\Delta_i,\m} = X_{\Delta_i,\m} \cup \{i\}$. Let $X_{\Delta_i,\m} = \{ j_1 > j_2 > \cdots > j_m \}$. Then, since $\m$ is ordered, $e(\Delta_{j_1}) < e(\Delta_{j_2}) < \cdots < e(\Delta_{j_m})$.

If $\sigma^{-1}$ is strictly decreasing on 
$X_{\Delta_i,\m}$, then $b(\Delta_{j_1}) < b(\Delta_{j_2}) < \cdots < b(\Delta_{j_m})$. Since all $j_k\in X_{\Delta_i,\m}$, we have $\Delta_{j_\ell} \prec \Delta_{j_{\ell+1}}$ for all $1\leq \ell\leq m-1$. Thus the function
\begin{equation}\label{fmatch}
\begin{array}{rrll}
f: & X_{\Delta,\m} & \to Y_{\Delta,\m},\\
& j_1 & \mapsto i,\\
& j_{\ell+1} & \mapsto j_{\ell},& 1\leq \ell\leq m-1,
\end{array}
\end{equation}
is a matching function from $X_{\Delta_i,\m}$ to $Y_{\Delta_i,\m}$. Thus $\LC(\Delta_i,\m)$.

If $Y_{\Delta_i,\m} \subsetneq X_{\Delta_i,\m} \cup \{i\}$, then there exists $j \in X_{\Delta_i,\m}$ such that $b(\Delta_j)=e(\Delta_i)+1$, and $Y_{\Delta_i,\m} = \left(X_{\Delta_i,\m}\setminus \{j\} \right) \cup \{i\}$. If $\sigma^{-1}$ is strictly decreasing on $X_{\Delta,\m}$, then necessarily $j=j_m$ and the function $f$ from \eqref{fmatch} is a matching function from $X_{\Delta_i,\m}$ to $Y_{\Delta_i,\m}$, as $j_m$ does not appear in the image of $f$. 

Conversely, suppose $\LC(\Delta_i,\m)$ and let $f$ be a matching function from $X_{\Delta_i,\m}$ to $Y_{\Delta_i,\m}$. Necessarily, $f(j_1) = i$, as $\Delta_i$ is the only segment considered which precedes $\Delta_{j_1}$. Recursively, we see that $f$ is the function from \eqref{fmatch}. As it is a matching function, we deduce that $\Delta_{j_\ell} \prec \Delta_{j_{\ell+1}}$ for all $1\leq \ell\leq m-1$, and thus $\sigma^{-1}$ is strictly decreasing on $X_{\Delta_i,\m}$.
\end{proof}

\begin{rem}
From Lemma~\ref{lemLcRCvide}, if $\m$ is a regular multisegment, for all $1\leq i\leq k$, $\LC(\Delta_i,\m - \Delta_i)$ (resp. $\RC(\Delta_i,\m - \Delta_i)$) is equivalent to the fact that $\Delta_i$ precedes (resp. is preceded by) no segment in $\m$.

Combining with Lemma~\ref{lemLCRCdecr}, we have the following equivalences:

\begin{tabular}{rcl}
$\Delta$ is a good \textbf{left} segment of $\m$ & $\Longleftrightarrow$ & \begin{minipage}[c]{7.5cm}
$\Delta$ precedes no other segment of $\m$ and $\Delta$ forms a ladder with the segments which precede it.
\end{minipage}\\[15pt]
$\Delta$ is a good \textbf{right} segment of $\m$ & $\Longleftrightarrow$ & \begin{minipage}[c]{7.5cm}
$\Delta$ is preceded by no other segment of $\m$ and $\Delta$ forms a ladder with the segments which are preceded by it.
\end{minipage}
\end{tabular}
\end{rem}

The following result is clear using this criteria.

\begin{lem}\label{lem_good_sub}
If $\m'$ is a sub-multisegment of $\m$ and $\Delta \in \m'$ is a good segment for $\m$, then it is a good segment for $\m'$.
\end{lem}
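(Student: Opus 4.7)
The plan is to apply the combinatorial characterization established in the remark immediately preceding the statement. In the regular setting (which is what is used throughout the paper), a segment $\Delta\in \m$ is good left if and only if $\Delta$ precedes no other segment of $\m$ and $\Delta$ together with the segments of $\m$ preceding it form a ladder; the good right case is symmetric. Both halves of each characterization are manifestly hereditary under passage to a sub-multisegment, so the lemma reduces to checking this stability.

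More precisely, suppose $\Delta\in\m'\subseteq\m$ is good left in $\m$. First, since $\Delta$ precedes no segment of $\m$, it \textit{a fortiori} precedes no segment of $\m'$. Second, the set of segments of $\m'$ which precede $\Delta$ is contained in the set of segments of $\m$ which precede $\Delta$; the latter together with $\Delta$ forms a ladder, and any sub-multiset of a ladder (viewed as a totally ordered chain of segments with strictly decreasing left and right endpoints) is again a ladder. Therefore $\Delta$ is good left in $\m'$. The good right case is obtained by the symmetric argument (swapping the roles of $X/Y$ and $\tilde X/\tilde Y$, equivalently the roles of ``precedes'' and ``is preceded by'').

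Equivalently one can argue directly from the conditions $\LC(\Delta,\m)$, $\RC(\Delta,\m)$ together with $\LC(\Delta,\m^-)$ (resp.\ $\RC(\Delta,\m^-)$) via Lemmas~\ref{lemLcRCvide} and~\ref{lemLCRCdecr}: the sets $X_{\Delta,\m'}$ and $\tilde X_{\Delta,\m'}$ are subsets of $X_{\Delta,\m}$ and $\tilde X_{\Delta,\m}$ respectively, so the emptiness conclusions of Lemma~\ref{lemLcRCvide} and the monotonicity conclusion of Lemma~\ref{lemLCRCdecr} (that $\sigma^{-1}$ is strictly decreasing on the relevant set) restrict automatically to the sub-multisegment. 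There is no real obstacle here; the only point requiring a moment of care is that restricting to $\m'$ does not introduce new segments preceding or preceded by $\Delta$, which is immediate from $\m'\subseteq \m$.
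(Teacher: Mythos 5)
Your proposal is correct and follows the same route as the paper, which simply observes that the lemma is clear from the combinatorial characterization of good segments given in the preceding remark (no segment of $\m$ preceded by / preceding $\Delta$, plus a ladder condition), both parts of which are manifestly inherited by sub-multisegments. Your write-up just makes explicit the heredity check that the paper leaves to the reader.
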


\begin{rem}
Note that the converse is not true. For example, any segment $\Delta$ is a good segment for itself, but not necessarily a good segment for any multisegment containing it.
\end{rem}

%
%
%
%
%

\subsection{Existence results}\label{sect_existence}

\begin{prop}\label{prop_existence}
For $N\geq 2$, let $\mathfrak{m}=\Delta_1 + \Delta_2 + \cdots + \Delta_N$ be a regular multisegment such that for all $i$, $\Delta_i=[a_i,b_i]$ with $b_1 > b_2 > \cdots > b_N$ and the associated permutation $\sigma$ avoids the patterns $4231$ and $3412$, and $\pi=Z(\mathfrak{m})$ is a \emph{prime} irreducible representation.  Then 
\begin{center}
\begin{tabular}{rccl}
& either $\Delta_1$ & or & $\Delta_{\sigma(1)}$, \\
and & either $\Delta_N$ & or & $\Delta_{\sigma(N)}$
\end{tabular}
\end{center}
correspond to good segments of $\m$.

Moreover, if $\sigma(N)=1$ (resp. $\sigma(1)=N$), then $\Delta_N$ is a good \textbf{right} segment (resp. $\Delta_1$ is a good \textbf{left} segment) of $\m$. If $\sigma(N)=1$ and $\sigma(1)=N$ then $\m$ is a ladder.

\end{prop}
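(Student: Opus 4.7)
The plan is to invoke the combinatorial criterion from Section~\ref{sect_comb}: a segment $\Delta$ is a good left segment of $\m$ if and only if $\Delta$ precedes no other segment of $\m$ and, together with its predecessors in $b$-order, forms a ladder; symmetrically for good right segments. Writing $\tau=\sigma^{-1}$, one observes immediately that $\Delta_1$ (max $b$) and $\Delta_{\sigma(N)}$ (max $a$) precede no other segment, while $\Delta_N$ (min $b$) and $\Delta_{\sigma(1)}$ (min $a$) are preceded by none. The candidates are therefore $\Delta_1,\Delta_{\sigma(N)}$ on the good-left side and $\Delta_N,\Delta_{\sigma(1)}$ on the good-right side, and verifying goodness reduces in each case to a ladder condition, which via Lemma~\ref{lemLCRCdecr} translates to monotonicity of $\tau$ on an appropriate subset.

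I would first dispatch the ``moreover'' assertions, which follow from pure pattern-avoidance arguments. Assuming $\sigma(N)=1$, the segment $\Delta_1$ occupies the $a$-position $N$ in $\sigma$ with value $1$. If $\Delta_N$ were not a good right segment, there would exist $b$-indices $k<l$ with $\Delta_N\prec\Delta_k,\Delta_l$ and $\tau(k)<\tau(l)$; a quick check rules out $k=1$ (which would force $\tau(k)=N$, contradicting $\tau(k)<\tau(l)$), so the four positions $\tau(N)<\tau(k)<\tau(l)<N$ in $\sigma$ carry the values $(N,k,l,1)$, a $4231$-pattern, contradiction. The case $\sigma(1)=N$ is symmetric. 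When both $\sigma(1)=N$ and $\sigma(N)=1$ hold, the same argument applied to arbitrary $1<i<j<N$ forces $\sigma$ to be strictly decreasing on $\{2,\ldots,N-1\}$, whence $\sigma=w_0$ and $\m$ is a ladder.

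For the main existence claim I argue by contradiction. If $\sigma(1)=1$, then $\Delta_1=\Delta_{\sigma(1)}$ has both minimal $a$ and maximal $b$, hence is unlinked with every other segment; by Proposition~\ref{prop_notprime} this forces $Z(\m)\simeq Z(\Delta_1)\times Z(\m\setminus\Delta_1)$, contradicting the primality of $\pi$. Assume therefore $\sigma(1)\neq 1$, and suppose that $\Delta_1$ is not a good left segment and $\Delta_{\sigma(1)}$ is not a good right segment. Via Lemma~\ref{lemLCRCdecr} the failures provide indices $i<j$ with $\Delta_i,\Delta_j\prec\Delta_1$ and $\tau(i)<\tau(j)$, i.e.\ a $231$-pattern in $\sigma$ at $(\tau(i),\tau(j),\tau(1))$, together with indices $k<l<\sigma(1)$ with $\Delta_{\sigma(1)}\prec\Delta_k,\Delta_l$ and $\tau(k)<\tau(l)$, i.e.\ a $312$-pattern at $(1,\tau(k),\tau(l))$.

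The core step is to upgrade these two $3$-patterns to a forbidden $3412$- or $4231$-pattern. If $\Delta_{\sigma(1)}$ and $\Delta_1$ are unlinked, i.e.\ $a_1>b_{\sigma(1)}+1$, then every segment preceded by $\Delta_{\sigma(1)}$ satisfies $a\leq b_{\sigma(1)}+1<a_1$, forcing $\tau(k),\tau(l)<\tau(1)$; the four positions $1<\tau(k)<\tau(l)<\tau(1)$ then carry values $(\sigma(1),k,l,1)$ with $1<k<l<\sigma(1)$, which is a $4231$-pattern, contradiction. If on the contrary $\Delta_{\sigma(1)}\prec\Delta_1$, a finer case analysis is needed: according to the relative positions of $a_1$ with respect to $a_k,a_l$ and of $b_{\sigma(1)}$ with respect to $b_i,b_j$, and according to whether the segments in the two failure patterns coincide (in particular whether $\sigma(1)\in\{i,j\}$), the $231$- and $312$-patterns combine into either a $4231$- or a $3412$-pattern; any residual degenerate configuration in which no forbidden pattern can be extracted would force a non-trivial unlinked decomposition of $\m$, excluded by primality. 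The claim for $\{\Delta_N,\Delta_{\sigma(N)}\}$ follows by the symmetric argument. The main obstacle is precisely this ``linked'' subcase: the $\sigma$-patterns alone are insufficient, and one must carefully exploit the numerical precedence constraints $b_i\geq a_1-1$ and $a_k\leq b_{\sigma(1)}+1$ to complete the classification.
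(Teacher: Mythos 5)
Your framework is the same as the paper's (the good-segment criterion of Section~\ref{sect_comb} combined with Lemma~\ref{lemLCRCdecr}), and several pieces of your argument are correct and complete: the identification of the four candidate segments, the case $\sigma(1)=1$ (you use primality where the paper simply observes that $\Delta_1$ is then a good segment; both work, though your citation of Proposition~\ref{prop_notprime} should really be Lemma~\ref{lemLcRCvide} plus the irreducibility corollary, since $Z(\m\setminus\Delta_1)$ is not a product of segment representations), the ``moreover'' assertions, and the subcase where $\Delta_1$ and $\Delta_{\sigma(1)}$ are unlinked. But there is a genuine gap at exactly the point you yourself call ``the main obstacle'': in the linked subcase $\Delta_{\sigma(1)}\prec\Delta_1$ you never carry out the case analysis, you only assert that the $231$- and $312$-patterns ``combine into either a $4231$- or a $3412$-pattern''. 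That subcase is the generic situation and is the real content of the proposition, so asserting it is not proving it. Moreover your safety net --- that any configuration yielding no forbidden pattern ``would force a non-trivial unlinked decomposition of $\m$, excluded by primality'' --- is unjustified (no argument is given) and is not the mechanism at play: the paper's proof of this step uses no primality whatsoever, consistently with the fact that Corollary~\ref{cor_2good} is stated without any primality hypothesis.

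For the record, here is how the gap is closed; it is the paper's two-configuration analysis, which splits on the location of the failure witnesses rather than on whether $\Delta_1$ and $\Delta_{\sigma(1)}$ are linked. Write $i_0=\sigma(1)$ and $\tau=\sigma^{-1}$; let $(i,j)$, $i<j$, be a nested witness pair for the failure of $\Delta_1$ (so $\Delta_i,\Delta_j\prec\Delta_1$ and $a_i<a_j<a_1$), and $(k,l)$, $k<l<i_0$, a nested witness pair for the failure of $\Delta_{i_0}$ (so $\Delta_{i_0}\prec\Delta_k,\Delta_l$ and $a_{i_0}<a_k<a_l\leq b_{i_0}+1$). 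Note $j\neq i_0$, since $j=i_0$ would force $a_i<a_{i_0}$, impossible. If $j<i_0$, the positions $1<\tau(i)<\tau(j)<\tau(1)$ carry the values $(i_0,i,j,1)$, a $4231$-pattern. If $a_l<a_1$, the positions $1<\tau(k)<\tau(l)<\tau(1)$ carry the values $(i_0,k,l,1)$, again a $4231$-pattern; this branch absorbs your unlinked case, where $a_l\leq b_{i_0}+1<a_1$ holds automatically. In the remaining case, $j>i_0$ and $a_l>a_1$, and then the positions $1<\tau(j)<\tau(1)<\tau(l)$ carry the values $(i_0,j,1,l)$, whose relative order is $3412$ because $1<l<i_0<j$. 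Every branch contradicts pattern avoidance, with no appeal to primality; your write-up needs this (or an equivalent) analysis before it constitutes a proof of the existence claim, and the same applies to the symmetric claim for $\Delta_N$ and $\Delta_{\sigma(N)}$, which you derive from it.
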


\begin{proof}
First of all, if $\sigma(1)=1$, then $\Delta_1$ is not linked with any other segment of $\m$, and it is both a good left and a good right segment of $\m$.

Let $i_0 = \sigma(1)$ and suppose $i_0 >1$. Suppose neither $\Delta_1$ nor $\Delta_{i_0}$ are good segments. From Lemma~\ref{lemLCRCdecr}, $\sigma_\m^{-1}$ is neither decreasing on $X_{\Delta_1,\m}$ nor on $\tilde{X}_{\Delta_{i_0},\m}$.
We consider different cases.

If there exists $i<j<i_0$ such that $\Delta_i \prec \Delta_1$ and $\Delta_j \prec \Delta_1$, or $\Delta_{i_0} \prec \Delta_i$ and $\Delta_{i_0} \prec \Delta_j$ and $\Delta_j\nprec \Delta_j$, the configuration is the following:

\begin{minipage}[c]{7cm}
\begin{tikzpicture}[scale=0.8]
\draw[ultra thick] (0,0) to (4,0);
\draw[ultra thick] (2,0.5) to (5,0.5);
\draw[ultra thick] (1,1) to (6,1);
\draw[ultra thick] (3,1.5) to (7,1.5);
\node at (4.5,0) {$\Delta_{i_0}$};
\node at (5.5,0.5) {$\Delta_j$};
\node at (6.5,1) {$\Delta_i$};
\node at (7.5,1.5) {$\Delta_1$};
\end{tikzpicture}
\end{minipage}
\begin{minipage}[c]{7.5cm}
The pattern $4231$ appears in this configuration, which is a contradiction.
\end{minipage}

Otherwise, there exists at least one $1<i<i_0$ such that $\Delta_{i_0} \prec \Delta_i$ and $\Delta_i \nprec \Delta_1$ and one $i_0 <j$ such that $\Delta_j\prec \Delta_1$ and $\Delta_j \nprec \Delta_{i_0}$.  The configuration is the following:

\begin{minipage}[c]{7cm}
\begin{tikzpicture}[scale=0.8]
\draw[ultra thick] (1,0) to (4,0);
\draw[ultra thick] (0,0.5) to (5,0.5);
\draw[ultra thick] (3,1) to (6,1);
\draw[ultra thick] (2,1.5) to (7,1.5);
\node at (4.5,0) {$\Delta_{j}$};
\node at (5.5,0.5) {$\Delta_{i_0}$};
\node at (6.5,1) {$\Delta_i$};
\node at (7.5,1.5) {$\Delta_1$};
\end{tikzpicture}
\end{minipage}
\begin{minipage}[c]{7.5cm}
The pattern $3412$ appears in this configuration, which is a contradiction.
\end{minipage}

The proof for $\Delta_N$ and $\Delta_{\sigma(N)}$ is exactly symmetric.

Now, suppose $\sigma(N)=1$. We know that either $\Delta_1$ or $\Delta_N$ is a good segment of $\m$. If $\Delta_1$ is a good segment and $\Delta_N$ is not a good segment, then we are necessarily in the first configuration drawn above, which features the pattern $4231$. 
Similarly, if $\sigma(1)=N$, then either $\Delta_1$ or $\Delta_N$ is a good segment of $\m$. The same pattern avoidance condition implies that $\Delta_1$ is necessarily good. 

If both $\sigma(N)=1$ and $\sigma(1)=N$ then any pair of segments between $\Delta_1$ and $\Delta_N$ which does not form a ladder would create a $4231$ pattern. Thus $\m$ is a ladder.
\end{proof}

%

This result has the following consequence.

\begin{cor}\label{cor_2good}
Let $\pi=Z(\mathfrak{m})$ be a regular representation avoiding the patterns $4231$ and $3412$, then $\pi$ has at least two good segments.
\end{cor}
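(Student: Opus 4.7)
My plan is to reduce to Proposition~\ref{prop_existence} in two steps: first handling the case when $Z(\m)$ is prime via the proposition together with a distinctness argument, then using a block decomposition when $Z(\m)$ is not prime.

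In the prime case, Proposition~\ref{prop_existence} yields a good segment in $\{\Delta_1,\Delta_{\sigma(1)}\}$ and a second one in $\{\Delta_N,\Delta_{\sigma(N)}\}$. Since $\sigma$ is injective and $N\geq 2$, these two segments coincide only if $\sigma(N)=1$ (forcing $\Delta_1=\Delta_{\sigma(N)}$) or $\sigma(1)=N$ (forcing $\Delta_N=\Delta_{\sigma(1)}$). Both these potential collisions are precisely the situations addressed by the ``moreover'' clause of the proposition: $\Delta_N$ is then a good right segment in the first case and $\Delta_1$ is a good left segment in the second, so a distinct alternative is always available on the affected side. A short case check on the four pairings of candidates finishes the argument.

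When $Z(\m)$ is not prime, I would decompose $\m=\m_1+\m_2$ with both parts nonempty and $Z(\m)=Z(\m_1)\times Z(\m_2)$; by Proposition~\ref{prop_notprime}, the segments of $\m_1$ are pairwise unlinked with those of $\m_2$. Each sub-multisegment $\m_i$ remains regular, and its permutation (a sub-permutation of $\sigma_\m$) still avoids $4231$ and $3412$. Inducting on $N$, each $\m_i$ contains at least one good segment---trivially if $|\m_i|=1$, and by the inductive hypothesis if $|\m_i|\geq 2$---and pairwise unlinkedness ensures these segments lift to good segments of $\m$, since both the irreducibility condition (i) and the embedding condition (ii) in the definition of a good segment transfer under the factorization $Z(\m)=Z(\m_1)\times Z(\m_2)$. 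This supplies at least two distinct good segments of $\m$.

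The main obstacle is the distinctness bookkeeping in the prime case: it is precisely the ``moreover'' clause of Proposition~\ref{prop_existence} that rescues the count when $\sigma(1)=N$ or $\sigma(N)=1$. The non-prime reduction, while a little bureaucratic, is routine once one notices that the unlinkedness between $\m_1$ and $\m_2$ makes the conditions defining a good segment compatible with the decomposition.
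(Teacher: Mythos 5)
Your prime case is sound, and it is in substance the paper's own (implicit) argument: Proposition~\ref{prop_existence} gives one good segment in $\{\Delta_1,\Delta_{\sigma(1)}\}$ and another in $\{\Delta_N,\Delta_{\sigma(N)}\}$, and the only possible coincidences, $\sigma(N)=1$ or $\sigma(1)=N$, are exactly the ones repaired by the ``moreover'' clause. The genuine gap is in your non-prime reduction, at the sentence ``by Proposition~\ref{prop_notprime}, the segments of $\m_1$ are pairwise unlinked with those of $\m_2$.'' Proposition~\ref{prop_notprime} is a statement about products of \emph{single-segment} representations $Z(\Delta_1)\times\cdots\times Z(\Delta_N)$ only; it says nothing about a product $Z(\m_1)\times Z(\m_2)$ of two general irreducibles, and the inference you draw from it is false in general. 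The paper itself supplies counterexamples: by Theorem~\ref{theo_main} (see Example~\ref{ex_segs}(2)), the product $Z([1;6]+[0;4]+[3;3])\times Z([1;6]+[0;4]+[2;5])$ is irreducible, although $[0;4]$ in the first factor is linked to both $[1;6]$ and $[2;5]$ in the second. Since your lifting of good segments from the factors $\m_i$ to $\m$ rests entirely on this unlinkedness (and such lifting cannot be taken for granted: the remark after Lemma~\ref{lem_good_sub} stresses that a good segment of a sub-multisegment need not be good for the ambient one), the non-prime case is not proved. The statement you actually need --- that for \emph{regular} $\m_1+\m_2$ irreducibility of $Z(\m_1)\times Z(\m_2)$ forces cross-unlinkedness --- does seem to be true, but proving it requires a real argument (for instance, comparing the coefficient of the standard module attached to an elementary degeneration along a linked cross pair in $[Z(\m_1+\m_2)]$ versus in $[Z(\m_1)][Z(\m_2)]$), not a citation of Proposition~\ref{prop_notprime}.

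The detour is also unnecessary, and this is how the paper gets the corollary: the proof of Proposition~\ref{prop_existence} never uses the primality hypothesis --- indeed its opening case $\sigma(1)=1$ can only occur when $Z(\m)$ is \emph{not} prime, since then $\Delta_1$ is unlinked with every other segment and $Z(\m)=Z(\Delta_1)\times Z(\m\setminus\Delta_1)$ --- the whole proof is a pattern-avoidance argument on $\sigma$ that is insensitive to primality. Hence the proposition's conclusion holds for every regular multisegment with $N\geq 2$ whose permutation avoids $4231$ and $3412$, prime or not, and the corollary follows by exactly the distinctness bookkeeping you carried out in your first case. So either observe this about the proposition's proof, or supply a genuine proof of the cross-unlinkedness claim for regular multisegments; as written, the appeal to Proposition~\ref{prop_notprime} is the one step that fails.
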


\begin{rem}\label{rem_avoid_real}
This criteria allows us to recover the implication (which is established by Theorem~\ref{theo_pat} \cite{LM18}):
\begin{center}
$\m$ avoids the patterns $4231$ and $3412$ \quad $\Longrightarrow$ \quad $Z(\m)$ is real.
\end{center}
This can be proved by induction on $N$, the number of segments in the multisegment $\m$. For completeness, let us detail the reasoning.

If $N=1$, then $\m=\Delta$ is just a segment, and $Z(\Delta)$ is real (for example as an application of Proposition~\ref{prop_notprime}). 

If $N\geq 2$ and $\m$ avoids the patterns $4231$ and $3412$, then from Corollary~\ref{cor_2good}, $\m$ has at least \textbf{one} good segment $\Delta$. Suppose without loss of generality that it is a good left segment. Then
\begin{align*}
Z(\m)\times Z(\m) & \hookrightarrow \underbrace{Z(\m)\times Z(\Delta)}_{\text{irreducible}}\times Z(\m^-),\\
&  \hookrightarrow Z(\Delta)\times Z(\m)\times Z(\m^-) \hookrightarrow \underbrace{Z(\Delta)\times Z(\Delta)}_{\text{irreducible}}\times Z(\m^-)\times Z(\m^-).
\end{align*}
However, $\m-$ has $N-1$ segments, and satisfy the pattern avoidance condition. Thus $ Z(\m^-)\times Z(\m^-)$ is irreducible by induction hypothesis.

Similarly, as $Z(\m)\twoheadleftarrow  Z(\m^-)\times Z(\Delta)$,
\[Z(\m)\times Z(\m)  \twoheadleftarrow Z(\m^-)\times Z(\m^-) \times Z(\Delta)\times Z(\Delta). \] 
Then the irreducibility of $Z(\m)\times Z(\m) $ is obtained through Lemma~\ref{lem_irr}.

Notice we only used the existence of one good segment in the proof, although there is two from Corollary~\ref{cor_2good}. 
\end{rem}

\section{Determinant formula}\label{sect_det_formula}

	\subsection{Alternate sum formula}
	
One of the results of \citep{LM18} is an alternate sum formula for every regular real representation using standard representations. Let $\pi=Z(\m)$ be a regular real representation, with $\m=[a_1;b_1] + \cdots + [a_N;b_N]$ such that $b_1 > \cdots > b_N$.

In the Grothendieck ring,
\begin{equation}\label{eq_alt_sum}
\pi = \sum_{\substack{\sigma'\in S_N \\ \sigma_0 \leq \sigma'\leq \sigma}} \sgn(\sigma'\sigma) Z([a_{\sigma(1)};b_{\sigma'(1)}])\times Z([a_{\sigma(2)};b_{\sigma'(2)}])\times \cdots \times Z([a_{\sigma(N)};b_{\sigma'(N)}]),
\end{equation}	
where $\sigma = \sigma_\m$ and for all $i$,
\[\sigma_0^{-1}(i)=\max\left\lbrace j\leq x_i \mid j\notin \sigma_0^{-1}(\{i+1, \ldots,N\})\right\rbrace, \quad \text{with } x_i =\#\{j\mid a_j\leq b_i+1\}.\]

\begin{rem}
The permutation $\sigma_0$ satisfies 
\[ \sigma_0 \leq \sigma' \quad \Leftrightarrow \quad \forall  i\in\{1,\ldots, N\}, a_{\sigma(i)}\leq b_{\sigma'(i)} +1.\]
\end{rem}

We deduce that equation~\eqref{eq_alt_sum} is equivalent to
\begin{equation}\label{eq_alt_sum_2}
\pi = \sum_{\substack{\sigma'\in S_N \\ \sigma'\leq \sigma}} \sgn(\sigma'\sigma) Z([a_{\sigma(1)};b_{\sigma'(1)}])\times Z([a_{\sigma(2)};b_{\sigma'(2)}])\times \cdots \times Z([a_{\sigma(N)};b_{\sigma'(N)}]).
\end{equation}
Indeed, for $\sigma'>\sigma_0$, at least one of the 	$Z([a_{\sigma(i)},b_{\sigma'(i)}])$ is not defined, and the term does not contribute to the sum in \eqref{eq_alt_sum_2}.

For all $i\in\{1,\ldots,N\}$, set $a'_i=a_{\sigma(i)}$, then equation~\eqref{eq_alt_sum_2} can be rewritten
\begin{equation}\label{eq_alt_sum_3}
\pi = \sgn(\sigma)\sum_{\sigma'\in[\id,\sigma]} \sgn(\sigma') Z([a'_1;b_{\sigma'(1)}])\times Z([a'_2;b_{\sigma'(2)}])\times \cdots \times Z([a'_N;b_{\sigma'(N)}]),
\end{equation}
where $[\id,\sigma]$ denotes the Bruhat interval of permutations in $S_N$ lower than $\sigma$.

	\subsection{Matrix determinant}

Equation~\eqref{eq_alt_sum_3} is similar to the determinant of a matrix, with some entries replaced by zeros to account for the missing permutations $\sigma'$. More precisely, for $\sigma_1,\sigma_2$ permutations in $S_N$, let
\[ \Gamma[\sigma_1, \sigma_2] := \{ (i,\sigma(i)) \mid \sigma\in [\sigma_1,\sigma_2], 1\leq i\leq N\}, \]
then permutations whose graph is contained in $\Gamma[\id, \sigma]$ form the \emph{right convex hull}, from the work of Sjöstrand \cite{S07}. The following is obtained using \cite[Theorem 4]{S07}.

\begin{prop}\cite[Proposition 3.3]{CSB21} \label{prop_RCH}
If the permutation $\sigma\in S_N$ avoids the patterns $4231$ and $3412$\footnote{in \cite{S07,CSB21}, the pattern avoidance condition is weaker, permutations are assumed to avoid the patterns $4231$, $35142$, $42513$, and $351624$}, and $M=(m_{i,j})_{1\leq i,j\leq N}$ is a square $N\times N$-matrix, then
\begin{equation}\label{eq_det_M_Gamma}
\det(M|_{\Gamma[\id, \sigma]}) = \sum_{\sigma'\in[\id,\sigma]}\sgn(\sigma') m_{1,\sigma'(1)}m_{2,\sigma'(2)}\cdots m_{N,\sigma'(N)}.
\end{equation}
\end{prop}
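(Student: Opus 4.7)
The plan is to start from the Leibniz expansion of the determinant and then identify which terms actually contribute. By definition of the restricted matrix $M|_{\Gamma[\id,\sigma]}$, the $(i,j)$-entry is $m_{i,j}$ if $(i,j) \in \Gamma[\id,\sigma]$ and $0$ otherwise. Thus the Leibniz formula immediately gives
\begin{equation*}
\det(M|_{\Gamma[\id,\sigma]}) = \sum_{\tau \in S_N} \sgn(\tau)\, \prod_{i=1}^N (M|_{\Gamma[\id,\sigma]})_{i,\tau(i)} = \sum_{\tau \in \CT} \sgn(\tau)\, m_{1,\tau(1)}\cdots m_{N,\tau(N)},
\end{equation*}
where $\CT$ is the set of permutations $\tau \in S_N$ whose entire graph $\{(i,\tau(i))\mid 1\leq i\leq N\}$ is contained in $\Gamma[\id,\sigma]$. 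Comparing with the right-hand side of \eqref{eq_det_M_Gamma}, the proposition reduces to the combinatorial identification $\CT = [\id,\sigma]$.

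The inclusion $[\id,\sigma] \subseteq \CT$ is immediate from the very definition of $\Gamma[\id,\sigma]$: any $\tau \in [\id,\sigma]$ contributes its own graph to the right convex hull. The content of the proposition is the reverse inclusion $\CT \subseteq [\id,\sigma]$, which says that a permutation whose graph happens to lie inside the union of graphs of the elements of the Bruhat interval must itself lie in the Bruhat interval. This is the place where pattern avoidance enters. The strategy is to invoke Sj\"ostrand's result \cite[Theorem 4]{S07}: under the stated pattern-avoidance hypothesis, the set $\Gamma[\id,\sigma]$ has the structure of a (skew) Ferrers board, and Sj\"ostrand proves that the placements of $N$ non-attacking rooks on such a board are in bijection with the elements of the Bruhat interval $[\id,\sigma]$. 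Translating rook placements into permutations is exactly the statement $\CT = [\id,\sigma]$.

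The main obstacle in this plan is therefore not the determinant manipulation, which is purely formal, but the verification that the hypothesis ``$\sigma$ avoids $4231$ and $3412$'' is sufficient to apply Sj\"ostrand's theorem. Strictly speaking Sj\"ostrand works under a weaker avoidance condition (as noted in the footnote of the proposition), so one should check that avoiding $4231$ and $3412$ implies avoiding $4231$, $35142$, $42513$ and $351624$ --- which is immediate since the latter three contain a $3412$ pattern on their first four rows in appropriate columns. After this verification, \cite[Theorem 4]{S07} applies directly and yields $\CT = [\id,\sigma]$, completing the proof. This is essentially the argument given in \cite[Proposition 3.3]{CSB21}, so in practice the cleanest write-up is to set up the Leibniz expansion, reduce to the combinatorial claim, and then quote Sj\"ostrand's theorem together with the pattern-implication check.
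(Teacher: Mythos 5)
Your proposal is correct and follows essentially the same route as the paper, which simply quotes \cite[Proposition 3.3]{CSB21} and notes that it rests on Sj\"{o}strand's \cite[Theorem 4]{S07} identification of the Bruhat interval $[\id,\sigma]$ with the non-attacking rook placements on the board $\Gamma[\id,\sigma]$; your Leibniz expansion and the reduction to the combinatorial claim $\CT=[\id,\sigma]$ is precisely the content behind that citation. One small slip worth fixing: the $3412$ pattern in $35142$, $42513$ and $351624$ does not sit in the first four rows (in $35142$ it occupies positions $1,2,3,5$, giving values $3,5,1,2$), but each of the three does contain $3412$, so your pattern-implication check, and hence the proof, stands.
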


\begin{rem}
Using Ferrers boards (see Appendix~\ref{sect_Ferrers}), the determinant in equation \eqref{eq_det_M_Gamma} can be computed placing the coefficient $m_{i,j}$ in the box $(i,j)$ of $[N]^2$ if it is coloured and 0 if it is not. Note that the dots are placed on the $Z([a_i;b_i])$. 
\end{rem}

Combining Proposition~\ref{prop_RCH} with \eqref{eq_alt_sum_3}, and assuming $\sigma$ avoids the patterns $4231$ and $3412$, we obtain the following:
\begin{equation}\label{eq_pi_matrix}
\pi = \sgn(\sigma)\det\left( (Z(a'_i;b_j))_{1\leq i,j\leq N} |_{\Gamma[\id, \sigma]}\right).
\end{equation}

	\subsection{Lewis Carroll's identity}

The following result is usually called the \emph{Lewis Carroll's identity} or the \emph{Desnanot–Jacobi identity}.

\begin{prop}\label{prop_lewis_carroll}
For $M$ a square $N\times N$-matrix, and $A,B \subset \{1,\ldots,N\}$, let $M_A^B$ be the matrix obtained from $M$ by removing all rows indexed by elements of $A$ and all columns indexed by elements of $B$. Then, for all $1\leq a < a' \leq N$ and $1\leq b < b' \leq N$,
\begin{equation}\label{eq_Lewis_Carroll}
\det(M)\det(M_{a,a'}^{b,b'}) = \det(M_{a}^{b})\det(M_{a'}^{b'}) - \det(M_{a}^{b'})\det(M_{a'}^{b}).
\end{equation}
\end{prop}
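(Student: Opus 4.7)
The plan is to deduce \eqref{eq_Lewis_Carroll} from Jacobi's classical theorem relating minors of the adjugate matrix to complementary minors of the original matrix; this gives the standard modern proof of the Desnanot--Jacobi identity.

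First, I would reduce to the case $\det(M)\neq 0$. Both sides of \eqref{eq_Lewis_Carroll} are polynomials in the entries of $M$, so it suffices to establish the identity on the Zariski-dense open locus where $M$ is invertible, and then extend by polynomial continuity.

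Next, I would apply Jacobi's minor identity in the case $k=2$. Writing $\mathrm{adj}(M)$ for the classical adjugate, Jacobi's theorem applied to rows $\{b,b'\}$ and columns $\{a,a'\}$ reads
\[
\det\bigl(\mathrm{adj}(M)[\{b,b'\}\mid\{a,a'\}]\bigr) = (-1)^{a+a'+b+b'}\,\det(M)\,\det\bigl(M_{a,a'}^{b,b'}\bigr),
\]
where the complementary submatrix arising on the right is precisely $M_{a,a'}^{b,b'}$ in the author's notation. On the other hand, expanding the left-hand side directly as a $2\times 2$ determinant, using $\mathrm{adj}(M)_{i,j}=(-1)^{i+j}\det(M_j^i)$, yields
\[
(-1)^{a+a'+b+b'}\bigl[\det(M_a^b)\det(M_{a'}^{b'})-\det(M_a^{b'})\det(M_{a'}^b)\bigr].
\]
Equating the two expressions and cancelling the common sign $(-1)^{a+a'+b+b'}$ gives \eqref{eq_Lewis_Carroll}.

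The only real obstacle is the sign bookkeeping in Jacobi's identity and in the $2\times 2$ expansion of $\mathrm{adj}(M)$, which is entirely routine: the two contributions of $(-1)^{a+a'+b+b'}$ cancel cleanly. Since \eqref{eq_Lewis_Carroll} is a classical identity admitting several independent proofs (Dodgson condensation, Plücker relations, or a direct Schur-complement computation for the block decomposition induced by the chosen rows and columns), one may alternatively invoke it without further comment, as is common in the literature.
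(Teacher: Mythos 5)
Your proof is correct; note, though, that the paper gives no proof of Proposition~\ref{prop_lewis_carroll} at all --- it is stated as the classical Lewis Carroll (Desnanot--Jacobi) identity and used as a black box, which is precisely the fallback you allow in your final paragraph. Your derivation checks out: Jacobi's identity for adjugate minors with $k=2$, applied to rows $\{b,b'\}$ and columns $\{a,a'\}$, yields $(-1)^{a+a'+b+b'}\det(M)\det(M_{a,a'}^{b,b'})$, the direct cofactor expansion of that same $2\times 2$ minor yields $(-1)^{a+a'+b+b'}\bigl[\det(M_{a}^{b})\det(M_{a'}^{b'})-\det(M_{a}^{b'})\det(M_{a'}^{b})\bigr]$, and the signs cancel to give \eqref{eq_Lewis_Carroll} on the invertible locus. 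One refinement is worth recording: the paper applies \eqref{eq_Lewis_Carroll} to matrices whose entries lie in the Grothendieck ring $\mathcal{R}$, not in a field, so your reduction to invertible $M$ should be phrased as establishing a polynomial identity with integer coefficients (for instance for the generic matrix over $\BQ(x_{ij})$, or over $\BC$ and comparing coefficients), which then specializes to every commutative ring; this is routine, but it is the step that legitimizes the use made of the proposition in the paper. Compared with the paper's bare citation, your route buys a self-contained verification at the price of invoking the stronger (equally classical) Jacobi theorem; the Dodgson-condensation or Schur-complement arguments you mention would be fully elementary alternatives.
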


We can use Proposition~\ref{prop_lewis_carroll} with equation \eqref{eq_pi_matrix} to write relations in the Grothendieck ring $\mathcal{R}$ involving $\pi$. However, if $M=\left( (Z(a'_i;b_j))_{1\leq i,j\leq N} |_{\Gamma[\id, \sigma]}\right)$, the determinant of the submatrix $M_i^j$ does not necessarily realize (up to a sign) the class of an irreducible representation $Z(\m')$ in $\mathcal{R}$, for all $1\leq i,j \leq N$. 

\begin{ex}
Let $\m=[1;4]+[0;3]+[2;2]$, the corresponding permutation is the reflection $\sigma=(12) \in S_3$. The alternate sum formula for the class of the irreducible representation is
\begin{align*}
Z(\m)& = Z([1;4])\times Z([0;3]) \times Z([2;2]) -Z([0;4])\times Z([1;3]) \times Z([2;2]),\\
	& = - \begin{vmatrix}
	Z([0;4]) & Z([0;3]) & 0 \\
	Z([1;4]) & Z([1;3]) & 0\\
	0 & 0 & Z([2;2])
	\end{vmatrix}.
\end{align*}
Let $M$ be the above matrix, then
\begin{align*}
\det(M_2^1) &= Z([0;3])\times Z([2;2]) = Z([0;3]+[2;2]) \quad \in \mathcal{R},\\
\det(M_3^1) &= 0 \neq Z(\m').
\end{align*}
\end{ex}

Nevertheless, it is possible to write explicit formulas in the Grothendieck ring $\mathcal{R}$ in some interesting cases.

We will use the following key result.

\begin{prop}\cite[Proposition 4.17]{CSB21}\label{prop_417}
Let $\sigma$ be a permutation in $S_N$ avoiding the patterns $4231$ and $3412$, and choose $i\in [N]$. Let $\overline{\sigma}^{i} \in S_{N-1}$ be the "flatten" permutation obtained from $\sigma$ by removing $(i,\sigma(i))$ and shifting the remaining numbers appropriately. Then for $M$ a $(N-1)\times(N-1)$-matrix,
\begin{equation}
\det(M|_{\Gamma[\id, \sigma]_i^{\sigma(i)}}) = \det(M|_{\Gamma[\id, \overline{\sigma}^{i}]}).
\end{equation}
\end{prop}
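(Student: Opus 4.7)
The plan is to expand both determinants using Proposition~\ref{prop_RCH} and then match the resulting sums through the natural flattening bijection on Bruhat intervals.

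First I would check that $\overline{\sigma}^i$ also avoids the patterns $4231$ and $3412$: any occurrence of such a pattern in $\overline{\sigma}^i$ lifts, via the unflattening map that re-inserts $\sigma(i)$ at position $i$, to an occurrence in $\sigma$. Hence Proposition~\ref{prop_RCH} applies to $\overline{\sigma}^i$, giving
\[
\det(M|_{\Gamma[\id, \overline{\sigma}^i]}) = \sum_{\pi' \in [\id, \overline{\sigma}^i]} \sgn(\pi') \prod_{j'=1}^{N-1} m_{j', \pi'(j')}.
\]

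Next I would expand $\det(M|_{\Gamma[\id,\sigma]_i^{\sigma(i)}})$ as an $(N-1)\times(N-1)$ determinant: it is a signed sum over those $\pi' \in S_{N-1}$ whose graph, once relabeled back into $([N]\setminus\{i\})\times([N]\setminus\{\sigma(i)\})$, is contained in $\Gamma[\id,\sigma]$. For each such $\pi'$, let $\widehat{\pi}\in S_N$ denote its \emph{unflattening}: the permutation with $\widehat{\pi}(i)=\sigma(i)$ that agrees with $\pi'$ elsewhere. The graph of $\widehat{\pi}$ lies entirely in $\Gamma[\id,\sigma]$ (including the trivial entry $(i,\sigma(i))$), so Proposition~\ref{prop_RCH} applied to $\sigma$ gives $\widehat{\pi}\leq\sigma$. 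Thus the LHS equals a signed sum over $\{\pi \leq \sigma \colon \pi(i)=\sigma(i)\}$, indexed by $\overline{\pi}^i$.

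The final step is to show that the flattening $\pi \mapsto \overline{\pi}^i$ restricts to a bijection between $\{\pi \in S_N \colon \pi \leq \sigma,\ \pi(i)=\sigma(i)\}$ and $\{\pi' \in S_{N-1} \colon \pi' \leq \overline{\sigma}^i\}$; this step does not require pattern avoidance. It follows from the rank matrix criterion for Bruhat order ($\pi \leq \sigma$ iff $r_\pi(a,b) \geq r_\sigma(a,b)$ for all $(a,b)$): the shared constraint $\pi(i)=\sigma(i)$ contributes identically to $r_\pi$ and $r_\sigma$, so the inequalities translate transparently to the rank inequalities for $\overline{\pi}^i$ and $\overline{\sigma}^i$. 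Once this is in place, the two expansions match term-by-term, since $\sgn(\pi')=\sgn(\overline{\pi}^i)$ is used on both sides.

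The main obstacle is verifying the second step carefully: one must check that the relabeling of indices commutes cleanly with Sjöstrand's expansion, and that every permutation $\pi'$ contributing to the $(N-1)$-dimensional determinant unflattens to a permutation below $\sigma$. This is where the pattern avoidance of $\sigma$ is essential; without it, $\Gamma[\id,\sigma]$ could accommodate graphs of permutations that do not lie in $[\id,\sigma]$, breaking the identification of the determinant with a sum over the Bruhat interval and invalidating the matching of LHS and RHS.
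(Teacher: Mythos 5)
The paper does not prove this proposition at all: it is imported verbatim from \cite[Proposition 4.17]{CSB21}, so there is no internal argument to compare yours against, and your proof has to stand on its own. It does — the route you take is correct and self-contained given the ingredients the paper states. The three pillars all hold: pattern avoidance passes to the flattening $\overline{\sigma}^{i}$ (any occurrence of $4231$ or $3412$ in $\overline{\sigma}^{i}$ lifts to one in $\sigma$); the permutations contributing to $\det(M|_{\Gamma[\id,\sigma]_i^{\sigma(i)}})$ are exactly the flattenings of permutations $\pi\leq\sigma$ with $\pi(i)=\sigma(i)$; and flattening gives a bijection $\{\pi\leq\sigma:\pi(i)=\sigma(i)\}\to[\id,\overline{\sigma}^{i}]$, which, as you say, needs no pattern hypothesis. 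Two details deserve to be written out to make this airtight. First, to deduce that $\mathrm{graph}(\widehat{\pi})\subseteq\Gamma[\id,\sigma]$ implies $\widehat{\pi}\leq\sigma$, you should not cite Proposition~\ref{prop_RCH} as a black box: it is an identity of polynomials in the entries of $M$, so you must compare coefficients of the linearly independent monomials $\prod_j m_{j,\tau(j)}$ on the two sides (or invoke Sj\"{o}strand's theorem directly, which the paper recalls just before Proposition~\ref{prop_RCH}: under the pattern condition, the permutations whose graph lies in $\Gamma[\id,\sigma]$ are precisely those in $[\id,\sigma]$). Second, in the rank-matrix step, the cells of the flattened grid correspond only to cells $(a,b)$ of the original grid with $a\neq i$ and $b\neq\sigma(i)$; you need the (easy) extra observation that the rank inequalities along row $i$ and column $\sigma(i)$ are implied by the adjacent ones, since $r_\pi(i,b)-r_\pi(i-1,b)$ and $r_\pi(a,\sigma(i))-r_\pi(a,\sigma(i)-1)$ take the same value for $\pi$ and $\sigma$ because both share the entry $(i,\sigma(i))$. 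With these two points made explicit, your argument is a complete and valid substitute for the external citation.
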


\begin{rem}
Note for $M$ a $N\times N$-matrix and for $1\leq i,j \leq N$,
\[ \left( M|_{\Gamma[\id, \sigma]}\right)_i^j = M_i^j|_{\Gamma[\id, \sigma]_i^j}. \]
\end{rem}

\section{Extended \texorpdfstring{$T$}{T}-system formula}\label{sect_main}

Our main result is the following, which will be proven in Section~\ref{proof_rel} and \ref{proof_irred}.
\begin{theo}\label{theo_main}
Let $\m = \Delta_1 + \Delta_2 + \cdots +\Delta_N$ be a regular multisegment, such that $b_1 > b_2 > \cdots > b_N$, where for all $1\leq i\leq N$, $\Delta_i=[a_i;b_i]$. Assume the corresponding permutation $\sigma$ avoids the patterns $4231$ and $3412$, and that $\sigma(N)\neq N$. Let 
\begin{equation*}
I =\left\lbrace i \mid \begin{array}{l}
a_N \leq a_i \\
b_i \leq b_{\sigma(N)}
\end{array}\right\rbrace = \{i_1 < i_2 < \cdots i_r\}.
\end{equation*}
Then, we have the following relation, in the Grothendieck ring $\mathcal{R}$:
\begin{equation}
Z(\m\setminus \Delta_N)\times Z(\m\setminus \Delta_{\sigma(N)}) = Z(\m)\times Z(\m\setminus \Delta_N,\Delta_{\sigma(N)}) + Z(\m')\times Z(\m''),\label{eq_moins_N}
\end{equation}
where 
\begin{equation*}
\m'=\sum_{j\notin I}\Delta_j + \sum_{k=1}^{r-1}[a_{i_k};b_{i_{k+1}}], \quad \m''=\sum_{i\notin I}\Delta_i + \sum_{k=1}^{r-1}[a_{i_{k+1}};b_{i_{k}}].
\end{equation*}
Moreover, the products in both terms on the right hand side of \eqref{eq_moins_N} are irreducible.
\end{theo}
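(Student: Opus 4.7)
The plan is to apply the Lewis Carroll identity (Proposition~\ref{prop_lewis_carroll}) to the matrix $M=(Z([a'_i;b_j]))_{1\leq i,j\leq N}$, whose restriction to $\Gamma[\id,\sigma]$ has determinant $\sgn(\sigma)Z(\m)$ by~\eqref{eq_pi_matrix}. The segment $\Delta_N$ has right endpoint $b_N$ indexed by column $N$ and left endpoint $a_N$ indexed by row $\sigma^{-1}(N)$ (since $a'_i=a_N$ iff $\sigma(i)=N$); symmetrically $\Delta_{\sigma(N)}$ corresponds to row $N$ and column $\sigma(N)$. As $\sigma(N)\neq N$, the pairs $(\sigma^{-1}(N),N)$ and $(N,\sigma(N))$ are two distinct points of the graph of $\sigma$, and I would apply~\eqref{eq_Lewis_Carroll} with $a=\sigma^{-1}(N)<a'=N$ and $b=\sigma(N)<b'=N$.

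Four of the six terms in~\eqref{eq_Lewis_Carroll} are then identifiable using Proposition~\ref{prop_417}: the full determinant $\det(M)$ realises $\sgn(\sigma)Z(\m)$; the \emph{matched} minors $\det(M_N^{\sigma(N)})$ and $\det(M_{\sigma^{-1}(N)}^N)$ correspond to removing one matched pair of the graph of $\sigma$ and give $\pm Z(\m\setminus\Delta_{\sigma(N)})$ and $\pm Z(\m\setminus\Delta_N)$ via the flattened permutations $\overline{\sigma}^N$ and $\overline{\sigma}^{\sigma^{-1}(N)}$ (which still avoid $4231$ and $3412$); and iterating Proposition~\ref{prop_417} on the twice-reduced minor $\det(M_{\sigma^{-1}(N),N}^{\sigma(N),N})$ gives $\pm Z(\m\setminus\Delta_N,\Delta_{\sigma(N)})$.

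The main obstacle is the remaining \emph{crossed} product $\det(M_{\sigma^{-1}(N)}^{\sigma(N)})\cdot\det(M_N^N)$, in which the removed row/column pairs are not matched by $\sigma$, so Proposition~\ref{prop_417} does not directly apply. I would expand each factor as a restricted sum over $[\id,\sigma]$ subject to a fixing condition ($\sigma'(N)=N$ for $\det(M_N^N)$, and $\sigma'(\sigma^{-1}(N))=\sigma(N)$ for the other), then use the Ferrers board description of $\Gamma[\id,\sigma]$ to track which entries survive in each submatrix. The set $I$ picks out precisely the indices where removing the two row/column pairs forces a non-trivial repairing of endpoints: outside $I$ the segments remain unchanged, while inside $I$ the restricted minors factor along the two shifted matchings $(a_{i_k},b_{i_{k+1}})$ and $(a_{i_{k+1}},b_{i_k})$, producing the new segments of $\m'$ and $\m''$. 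After tracking signs across the four products in~\eqref{eq_Lewis_Carroll}, the identity rearranges to~\eqref{eq_moins_N}.

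The irreducibility of the two products on the right hand side of~\eqref{eq_moins_N} would follow by induction on $N$, modelled on Remark~\ref{rem_avoid_real}. The sub-multisegment $\m\setminus\{\Delta_N,\Delta_{\sigma(N)}\}$ inherits pattern-avoidance from $\m$, and a combinatorial check from the explicit shuffling rule shows that $\m'$ and $\m''$ are likewise pattern-avoiding regular. Corollary~\ref{cor_2good} and Proposition~\ref{prop_existence} then guarantee good segments of $\m$ among $\{\Delta_1,\Delta_{\sigma(1)}\}$ and $\{\Delta_N,\Delta_{\sigma(N)}\}$, at least one of which (outside the ladder case, already treated in~\cite{LM14}) can be chosen as a common good segment of the two factors. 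Peeling it off via Lemma~\ref{lem_good_sub} reduces to strictly smaller products, to which the induction hypothesis applies, and one concludes via Lemma~\ref{lem_irr} exactly as in Remark~\ref{rem_avoid_real}.
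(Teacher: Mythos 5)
Your overall strategy matches the paper's: apply the Lewis Carroll identity to the restricted matrix $\tilde M = M|_{\Gamma[\id,\sigma]}$ (note it must be the restricted matrix throughout, not $M$ itself, whose entries are not even all defined) on rows $\sigma^{-1}(N),N$ and columns $\sigma(N),N$, identify the four matched minors via Proposition~\ref{prop_417}, and prove irreducibility by peeling off good segments as in Remark~\ref{rem_avoid_real}. However, at the two points where the real work happens, your argument has genuine gaps. For the crossed minors $\det(\tilde M_N^N)$ and $\det(\tilde M_{\sigma^{-1}(N)}^{\sigma(N)})$, which you correctly isolate as the main obstacle, your proposal --- expand as a sum over $\sigma'\in[\id,\sigma]$ with a fixing condition and ``track which entries survive'' --- asserts rather than proves the identification with $\pm Z(\m'')$ and $\pm Z(\m')$. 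Such a restricted sum is not a priori of the form \eqref{eq_pi_matrix} for any multisegment: to recognize it as $\sgn(\cdot)Z(\m'')$ you must exhibit a board $\Gamma[\id,\tau]$, with $\tau$ pattern avoiding, whose entries realize the segments of $\m''$. The paper does this with a structural fact you never invoke: since $\sigma$ avoids $3412$, either columns $\sigma(N)$ and $N$ or rows $\sigma^{-1}(N)$ and $N$ of $\Gamma[\id,\sigma]$ are identical (Lemma~\ref{lem_ferrers}). This licenses a cyclic shift of the columns (resp.\ rows) between $\sigma(N)$ and $N$ that leaves the relevant minor unchanged, turns $\tilde M_N^N$ into $\text{shift}(\tilde M)_N^{\sigma(N)}$, and lands exactly on the board $\Gamma[\id,\overline\sigma^{N}]$ with the dots now sitting on the recombined segments $Z([a_{i_{k+1}};b_{i_k}])$ --- which is how $\m''$ actually appears. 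The signs are then computed block by block with Lemma~\ref{lem_sgn}; ``after tracking signs'' is not a substitute for this, since the three flattened permutations acquire different corrections $(-1)^{r-1}$, $(-1)^{r-1+\sharp B}$, $(-1)^{\sharp B+1}$ relative to $\sgn(\sigma)$.

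The second, more serious gap is the irreducibility of $Z(\m')\times Z(\m'')$. You propose to take a good segment of $\m$ among $\{\Delta_1,\Delta_{\sigma(1)}\}$ and pass it to both factors ``via Lemma~\ref{lem_good_sub}''. That lemma applies only to \emph{sub-multisegments}, and $\m'$, $\m''$ are not sub-multisegments of $\m$: they contain the new segments $[a_{i_k};b_{i_{k+1}}]$ and $[a_{i_{k+1}};b_{i_k}]$ obtained by recombining endpoints. So the claim that $\Delta_1$ (say) remains a common good segment of $\m'$ and $\m''$ does not follow from anything you cite; it is precisely what the paper proves by a separate case analysis, showing that any failure of the ladder/precedence conditions for $\Delta_1$ in $\m'$ or $\m''$ would force either an unshifted segment to lie in $I$ or a failure of goodness already in $\m$. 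Moreover the induction for this product is run on $N-r$ (with the ladder case $r=N$, quoted from \cite{BLM}, as base), not on $N$. Your argument is fine for the other product $Z(\m)\times Z(\m\setminus\Delta_N,\Delta_{\sigma(N)})$, where both factors are genuine sub-multisegments of $\m$ and Lemma~\ref{lem_good_sub} does apply.
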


\begin{rem}\begin{enumerate}
	\item If $\sigma(N)=N$, then the segment $\Delta_N$ is not linked to any other segment of $\m$. In that case
\[ Z(\m) = Z(\m\setminus \Delta_N)\times Z(\Delta_N).\]

	\item As $\sigma$ avoids the pattern $4231$, the segments $\Delta_i$, with $i\in I$ form a ladder.
\end{enumerate}
\end{rem}

\begin{cor}\label{cor_main}
Let us assume the permutation $\sigma$ avoids the patterns $4231$ and $3412$ and satisfies $\sigma(1)\neq 1$. Let
\[J=\left\lbrace j \mid \begin{array}{l}
 a_j  \leq a_1\\
b_{\sigma(1)} \leq b_j 
\end{array}\right\rbrace  = \{j_1 < j_2 < \cdots j_s\}. \]
The following relation in satisfied in the Grothendieck ring $\mathcal{R}$:
\begin{equation}
Z(\m\setminus \Delta_1)\times Z(\m\setminus \Delta_{\sigma(1)}) = Z(\m)\times Z(\m\setminus \Delta_1,\Delta_{\sigma(1)}) + Z(\m')\times Z(\m''),\label{eq_moins_1}
\end{equation}
where
\[\m'=\sum_{i\notin J}\Delta_i + \sum_{k=1}^{s-1}[a_{j_k};b_{j_{k+1}}], \quad \m''=\sum_{i\notin J}\Delta_i + \sum_{k=1}^{s-1}[a_{j_{k+1}};b_{j_{k}}].\]
\end{cor}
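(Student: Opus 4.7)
The proof mirrors that of Theorem~\ref{theo_main} applied to the ``top-left corner'' $(1,\sigma(1))$ rather than the ``bottom-right corner'' $(N,\sigma(N))$ of the matrix $M = \bigl(Z([a_{\sigma(i)};b_j])\bigr)_{1\leq i,j\leq N}$. Recall from \eqref{eq_pi_matrix} that $Z(\m) = \sgn(\sigma)\det(M|_{\Gamma[\id,\sigma]})$, and observe that $\Delta_1$ sits at the matrix position $(\sigma^{-1}(1),1)$ while $\Delta_{\sigma(1)}$ sits at position $(1,\sigma(1))$. The hypothesis $\sigma(1)\neq 1$ forces $\sigma^{-1}(1)>1$ and $\sigma(1)>1$, so that the ordered row pair $\{1,\sigma^{-1}(1)\}$ and column pair $\{1,\sigma(1)\}$ are both admissible inputs to the Lewis Carroll identity (Proposition~\ref{prop_lewis_carroll}).

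Applying Lewis Carroll to $M|_{\Gamma[\id,\sigma]}$ with these indices yields a six-term relation among minors. Four of these minors, namely $\det(M|_\Gamma)$, $\det\bigl((M|_\Gamma)_1^{\sigma(1)}\bigr)$, $\det\bigl((M|_\Gamma)_{\sigma^{-1}(1)}^1\bigr)$, and $\det\bigl((M|_\Gamma)_{1,\sigma^{-1}(1)}^{1,\sigma(1)}\bigr)$, correspond to deleting pairs of indices lying on the permutation diagonal $(k,\sigma(k))$; applying Proposition~\ref{prop_417} once or twice flattens them to determinantal expressions for $Z(\m)$, $Z(\m\setminus\Delta_{\sigma(1)})$, $Z(\m\setminus\Delta_1)$, and $Z(\m\setminus\Delta_1,\Delta_{\sigma(1)})$ respectively, with signs recombining to the expected global factor. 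The remaining two ``off-diagonal'' minors $\det\bigl((M|_\Gamma)_1^1\bigr)$ and $\det\bigl((M|_\Gamma)_{\sigma^{-1}(1)}^{\sigma(1)}\bigr)$ must then be identified with $Z(\m')$ and $Z(\m'')$.

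This last identification is the main obstacle, and will proceed as in Theorem~\ref{theo_main}: pattern avoidance of $3412$ and $4231$ forces the segments $\{\Delta_j : j\in J\}$ to form a sub-ladder, and deleting row $1$ together with column $1$ (respectively row $\sigma^{-1}(1)$ together with column $\sigma(1)$) locally ``reshuffles'' the endpoints of this ladder, producing precisely the staircase segments $[a_{j_k};b_{j_{k+1}}]$ of $\m'$ (respectively $[a_{j_{k+1}};b_{j_k}]$ of $\m''$). Pattern avoidance also guarantees that the flattened permutation induced on each of these submatrices still avoids $3412$ and $4231$, so that \eqref{eq_pi_matrix} applies to express these minors as $\sgn\cdot Z(\m')$ and $\sgn\cdot Z(\m'')$. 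Finally, to upgrade the Grothendieck-ring identity \eqref{eq_moins_1} to the irreducibility of both products on its right-hand side, I would run the same induction as in the proof of Theorem~\ref{theo_main}, based on the good-segments machinery of Section~\ref{sect_good} (Proposition~\ref{prop_existence}, Corollary~\ref{cor_2good}, Lemma~\ref{lem_good_sub}) together with the $\LI$/$\RI$ criteria of Proposition~\ref{prop_irred_LI_RI}.
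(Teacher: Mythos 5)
Your plan is sound, but it is genuinely different from what the paper does, and the difference matters. The paper proves Corollary~\ref{cor_main} in one sentence: apply Theorem~\ref{theo_main} to the reflected multisegment $[-b_N;-a_N]+\cdots+[-b_1;-a_1]$ and transport the identity back through the duality $[a;b]\mapsto[-b;-a]$; it never returns to the matrix. You instead rerun the determinantal machinery of Section~\ref{proof_rel} at the opposite corner: Lewis Carroll on rows $1,\sigma^{-1}(1)$ and columns $1,\sigma(1)$, Proposition~\ref{prop_417} for the four minors whose deleted rows and columns meet at dots, and the dot-shifting ladder argument for the two remaining minors. Here is what the comparison buys: the paper's reduction would be much shorter, but it relies on the reflection exchanging the two distinguished configurations, and it does not. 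The reflection sends the segment with largest end to the segment with smallest beginning and vice versa, so the corollary's pair $(\Delta_1,\Delta_{\sigma(1)})$ is sent to a pair of the same (largest end, smallest beginning) type; on the matrix side the reflection acts as transposition (the reflected multisegment has permutation $\sigma^{-1}$), which fixes the bottom-right corner rather than exchanging it with the top-left. Transporting Theorem~\ref{theo_main} applied to the reflected multisegment back therefore reproduces \eqref{eq_moins_N} for $\m$, not \eqref{eq_moins_1}: on Example~\ref{ex_segs}(1), the reflected multisegment is $[-2;0]+[-1;-1]+[-3;-2]$ and the theorem there removes the reflections of $[2;3]$ and $[1;1]$, whereas the corollary must remove $[2;3]$ and $[0;2]$. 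So your direct argument is in fact the one that carries the statement. The one ingredient you use implicitly and should make explicit is the top-left analogue of Lemma~\ref{lem_ferrers} (either columns $1$ and $\sigma(1)$, or rows $1$ and $\sigma^{-1}(1)$, of $\Gamma[\id,\sigma]$ coincide); it follows from that lemma applied to $w_0\sigma w_0$, since the pattern set $\{4231,3412\}$ is stable under conjugation by $w_0$ and $\Gamma[\id,w_0\sigma w_0]$ is the $180^\circ$ rotation of $\Gamma[\id,\sigma]$. With that in hand, your identifications $\det\bigl((M|_{\Gamma[\id,\sigma]})_1^1\bigr)=\pm Z(\m')$ and $\det\bigl((M|_{\Gamma[\id,\sigma]})_{\sigma^{-1}(1)}^{\sigma(1)}\bigr)=\pm Z(\m'')$, and the sign bookkeeping, go through exactly as in the proof of the theorem.
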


\begin{proof}
The result is obtained by applying Theorem~\ref{theo_main} to the irreducible representation $Z(\m')$, with $\m' = [-b_N;-a_N] + \cdots + [-b_1;-a_1]$.
\end{proof}

\begin{ex}\label{ex_segs}
\begin{enumerate}
	\item Let $\m = [2;3] + [0;2] + [ 1;1]$. The corresponding regular  representation  $Z(\m)$ is real, since its associated permutation is $\sigma = 231$. It has two good right segments, which are $[0;2]$ and $[ 1;1]$ ($\Delta_{\sigma(1)}$ and $\Delta_3$). Applying Theorem~\ref{theo_main} gives the following relation:
	\[ Z([2;3] + [0;2])\times Z([0;2] + [ 1;1]) = Z(\m)\times Z([0;2]) + Z([0;2])\times Z([1;3] + [0;2]).
	\]
Note that $Z([0;2] + [ 1;1]) \cong Z([0;2]) \times Z([ 1;1])$, and in this case the above relation can be simplified by $Z([0;2])$.

	\item Let $\m = [1;6] + [3;5] + [0;4] + [ 2;3]$. The corresponding regular  representation  $Z(\m)$ is real, since its associated permutation is $\sigma = 3142$. It has two good segments, which are $[1;6]$ (left) and $[ 2;3]$ (right). Applying Theorem~\ref{theo_main} gives the following relation:
\begin{multline*}
Z([1;6] + [3;5] + [0;4])\times Z([1;6] + [0;4] + [ 2;3]) = Z(\m)\times Z([1;6] + [0;4]) \\
+ Z([1;6] + [0;4] + [3;3])\times Z([1;6] + [0;4] + [2;5]).
\end{multline*}
Whereas applying Corollary~\ref{cor_main} gives the following relation:
\begin{multline*}
Z([3;5] + [0;4] + [2;3])\times Z([1;6] + [3;5] + [2;3]) = Z(\m)\times Z([3;5] + [2;3]) \\
+ Z([3;5] + [2;3] + [1;4])\times Z([3;5] + [2;3] + [0;6]).
\end{multline*}
\end{enumerate}
\end{ex}

	\subsection{Ladder case}\label{sect_ladders}

If $\m$ is a ladder, then the corresponding permutation is the longest permutation $w_0$. Thus $\Gamma[\id, w_0]=[N]$. In that case, the result of Theorem~\ref{theo_main} is already known, as Corollary~12 of \cite{LM14}, or Theorem~4.1 in \cite{MY12}, in the language of representations of quantum affine algebras.

\begin{theo}\label{theo_ladder}
Let $\m = \Delta_1+\cdots+\Delta_N$ be a ladder multisegment, with $\Delta_i=[a_i;b_i]$, then
\begin{multline}\label{eq_ext_T-sys}
Z(\Delta_1 + \cdots + \Delta_{N-1})\times Z(\Delta_2+ \cdots + \Delta_{N}) = Z(\m)\times Z(\Delta_2 + \cdots + \Delta_{N-1}) \\
+ Z([a_1;b_2] + \cdots + [a_{N-1};b_N])\times Z([a_2;b_1] + \cdots + [a_{N};b_{N-1}]).
\end{multline}
\end{theo}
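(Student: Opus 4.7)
The plan is to recognise Theorem~\ref{theo_ladder} as a direct specialisation of Theorem~\ref{theo_main} to the case where the associated permutation is the longest element $w_0$. For a ladder $\m = \Delta_1 + \cdots + \Delta_N$ ordered so that $b_1 > b_2 > \cdots > b_N$, Definition~\ref{def_ladder} also gives $a_1 > a_2 > \cdots > a_N$, which forces $\sigma_\m(i) = N+1-i$, i.e.\ $\sigma_\m = w_0$. Since $w_0$ is strictly decreasing, every one of its subsequences is also decreasing, so $w_0$ vacuously avoids both $4231$ and $3412$ and the hypotheses of Theorem~\ref{theo_main} are met. With $\sigma(N) = 1$, the set
\[
I = \{i \mid a_N \leq a_i \text{ and } b_i \leq b_1\}
\]
consists of all indices, as $a_N = \min_i a_i$ and $b_1 = \max_i b_i$; thus $I = \{1, \ldots, N\}$ and $i_k = k$.

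Substituting into \eqref{eq_moins_N}, the left-hand side becomes $Z(\Delta_1 + \cdots + \Delta_{N-1}) \times Z(\Delta_2 + \cdots + \Delta_N)$, the first term on the right becomes $Z(\m) \times Z(\Delta_2 + \cdots + \Delta_{N-1})$, and the auxiliary multisegments collapse to $\m' = \sum_{k=1}^{N-1}[a_k; b_{k+1}]$ and $\m'' = \sum_{k=1}^{N-1}[a_{k+1}; b_k]$. This is precisely \eqref{eq_ext_T-sys}, and the irreducibility of the two products on the right follows from the corresponding statement in Theorem~\ref{theo_main}.

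A more self-contained argument, mirroring the original proof in \cite{LM14}, applies Proposition~\ref{prop_lewis_carroll} directly to the alternate sum formula \eqref{eq_alt_sum_3}. In the ladder case $\Gamma[\id, w_0] = [N]^2$ is the full square, so \eqref{eq_pi_matrix} reads $Z(\m) = \sgn(w_0)\det M$ with $M = \bigl(Z([a_{N+1-i}; b_j])\bigr)_{1 \leq i, j \leq N}$. Applying the Lewis Carroll identity \eqref{eq_Lewis_Carroll} with row pair and column pair both equal to $\{1, N\}$, one checks that the five minors $M_1^1$, $M_N^N$, $M_1^N$, $M_N^1$, $M_{1,N}^{1,N}$ correspond, respectively, to the ladder multisegments $\m'$, $\m''$, $\m \setminus \Delta_N$, $\m \setminus \Delta_1$, $\m \setminus \{\Delta_1, \Delta_N\}$, so each minor is itself the full determinant given by \eqref{eq_alt_sum_3} applied to a smaller ladder.

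There is essentially no obstacle here. The only mild bookkeeping is tracking the signs $\sgn(w_0^N)$, $\sgn(w_0^{N-1})$, $\sgn(w_0^{N-2})$, which combine to flip the sign of the left-hand side of \eqref{eq_Lewis_Carroll} and thereby turn the subtraction on the right into the addition appearing in \eqref{eq_ext_T-sys}. The ladder case is the cleanest instance of the determinantal framework of Section~\ref{sect_det_formula}, which is why this identity was already known: it appears as \cite[Corollary 12]{LM14} and, via quantum affine Schur-Weyl duality, as \cite[Theorem 4.1]{MY12}.
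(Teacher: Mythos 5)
Your proposal is correct, and your ``more self-contained argument'' is in fact the paper's own proof (Section~\ref{sect_ladders}): the paper likewise applies the Lewis Carroll identity \eqref{eq_Lewis_Carroll} to the full matrix $M=\left(Z([a'_i;b_j])\right)_{1\leq i,j\leq N}$ on rows and columns $1$ and $N$, identifies the five minors with smaller ladders exactly as you do (your labelling $M_1^N\leftrightarrow \m\setminus\Delta_N$, $M_N^1\leftrightarrow\m\setminus\Delta_1$ is the correct one for the stated convention on $M_A^B$; the paper's displayed formulas swap these two, harmlessly, since those minors enter only as a product), and tracks the signs $(-1)^{\lfloor N/2\rfloor}$, $(-1)^{\lfloor (N-1)/2\rfloor}$, $(-1)^{\lfloor N/2\rfloor-1}$ that convert the subtraction of \eqref{eq_Lewis_Carroll} into the addition of \eqref{eq_ext_T-sys}. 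Your primary route --- specialising Theorem~\ref{theo_main} at $\sigma=w_0$, $I=\{1,\ldots,N\}$ --- differs from the paper's presentation: the paper states Theorem~\ref{theo_ladder} as a known result (\cite[Corollary 12]{LM14}, \cite[Theorem 4.1]{MY12}) and uses the ladder computation as a template for, rather than a corollary of, the general case. The specialisation is nonetheless legitimate for the relation \eqref{eq_ext_T-sys} itself, because the proof of \eqref{eq_moins_N} in Section~\ref{proof_rel} nowhere uses Theorem~\ref{theo_ladder} as an input (it only repeats its column-shift reasoning in general form). However, your closing claim that the irreducibility of the two right-hand products ``follows from the corresponding statement in Theorem~\ref{theo_main}'' would be circular inside this paper: the irreducibility part of Theorem~\ref{theo_main} is proved in Section~\ref{proof_irred} by an induction whose base case is precisely the ladder case, where irreducibility is quoted from \cite[Exemple 4.5]{BLM}. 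Since Theorem~\ref{theo_ladder} as stated asserts only the identity in $\CR$, this does not affect your proof of the statement, but the ladder-case irreducibility should be attributed to \cite{BLM} (or \cite{MY12}) rather than deduced from Theorem~\ref{theo_main}.
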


In this case, the result comes from the application of the Lewis Carroll identity \eqref{eq_Lewis_Carroll} to the matrix $\left( Z([a_i;b_j])\right)_{1\leq i,j\leq N}$, on lines and columns 1 and $N$. However, in order to understand better the general case, let us consider in more details the application of the Lewis Carroll identity to the matrix $M=\left( Z([a'_i;b_j])\right)_{1\leq i,j\leq N}$ (recall that $a'_i=a_{N-i+1}$). 

One can look at what happens to the Ferrers boards (see Appendix~\ref{sect_Ferrers}) in this case . The permutation $w_0$ is represented by an anti-diagonal, and the Ferrers board is the full grid. Taking out row 1 and column $N$, one gets exactly the grid corresponding to the longest element of $S_{N-1}$. 

\begin{center}
\begin{tikzpicture}
\node at (-2.5,0) {$S_5 \ni (15)(24) =$};
\node at (0,0) {
\begin{tikzpicture}
\draw[step=0.4cm,color=black] (0,0) grid (2,2);
\fill[color=gray, opacity=0.3] (0,0) rectangle (2,2);
\node at (1.8,1.8) {$\bullet$};
\node at (1.4,1.4) {$\bullet$};
\node at (1,1) {$\bullet$};
\node at (0.6,0.6) {$\bullet$};
\node at (0.2,0.2) {$\bullet$};
\draw[thick, red] (-0.2,1.8) to (2.2,1.8);
\draw[thick, red] (1.8,-0.2) to (1.8,2.2);
\end{tikzpicture}};
\node[red,thick] at (2,0) {$\longrightarrow$};
\node at (4,0) {
\begin{tikzpicture}
\draw[step=0.4cm,color=black] (0,0) grid (1.6,1.6);
\fill[color=gray, opacity=0.3] (0,0) rectangle (1.6,1.6);
\node at (1.4,1.4) {$\bullet$};
\node at (1,1) {$\bullet$};
\node at (0.6,0.6) {$\bullet$};
\node at (0.2,0.2) {$\bullet$};
\end{tikzpicture}};
\node at (6.2,0) {$= (14)(23) \in S_4$};
\end{tikzpicture}
\end{center}

As the signature of the longest permutation in $S_N$ is $(-1)^{\lfloor \frac{N}{2}\rfloor}$, one has
\begin{align*}
(-1)^{\lfloor \frac{N-1}{2}\rfloor}\det(M_N^1) & = Z(\Delta_1 + \cdots + \Delta_{N-1}),\\
(-1)^{\lfloor \frac{N-1}{2}\rfloor}\det(M^N_1) & = Z(\Delta_2 + \cdots + \Delta_{N}),\\
(-1)^{\lfloor \frac{N}{2}\rfloor-1}\det(M_{1,N}^{1,N}) & = Z(\Delta_2 + \cdots + \Delta_{N-1}).
\end{align*}

Now, taking out row 1 and column 1, or row $N$ and column $N$, one gets again the grid corresponding to the longest element of $S_{N-1}$, but the dots have moved. For example, if one does a cyclic permutation of the columns by shifting them to the left and placing column 1 at the end, then taking out row 1 and column 1 gives the same result as taking out row 1 and column $N$ in the shifted board. 

\begin{center}
\begin{tikzpicture}
\node at (0,0) {
\begin{tikzpicture}
\draw[step=0.4cm,color=black] (0,0) grid (2,2);
\fill[color=gray, opacity=0.3] (0,0) rectangle (2,2);
\node at (1.8,1.8) {$\bullet$};
\node at (1.4,1.4) {$\bullet$};
\node at (1,1) {$\bullet$};
\node at (0.6,0.6) {$\bullet$};
\node at (0.2,0.2) {$\bullet$};
\draw[thick, red] (-0.2,1.8) to (2.2,1.8);
\draw[thick, red] (0.2,-0.2) to (0.2,2.2);
\end{tikzpicture}};
\node[thick] at (2,0) {$\longrightarrow$};
\node at (4,0) {
\begin{tikzpicture}
\draw[step=0.4cm,color=black] (0,0) grid (2,2);
\fill[color=gray, opacity=0.3] (0,0) rectangle (2,2);
\node[blue] at (1.8,1.8) {$\bullet$};
\node[blue] at (1.4,1.4) {$\bullet$};
\node[blue] at (1,1) {$\bullet$};
\node[blue] at (0.6,0.6) {$\bullet$};
\node[blue] at (0.2,0.2) {$\bullet$};
\draw[thick, blue] (-0.2,1.8) to (2.2,1.8);
\draw[thick, blue] (1.8,-0.2) to (1.8,2.2);
\node at (1.4,1.8) {$\bullet$};
\node at (1,1.4) {$\bullet$};
\node at (0.6,1) {$\bullet$};
\node at (0.2,0.6) {$\bullet$};
\node at (1.8,0.2) {$\bullet$};
\end{tikzpicture}};
\node[thick] at (6,0) {$\longrightarrow$};
\node at (8,0) {
\begin{tikzpicture}
\draw[step=0.4cm,color=black] (0,0) grid (1.6,1.6);
\fill[color=gray, opacity=0.3] (0,0) rectangle (1.6,1.6);
\node[blue] at (1.4,1.4) {$\bullet$};
\node[blue] at (1,1) {$\bullet$};
\node[blue] at (0.6,0.6) {$\bullet$};
\node[blue] at (0.2,0.2) {$\bullet$};
\node at (1,1.4) {$\bullet$};
\node at (0.6,1) {$\bullet$};
\node at (0.2,0.6) {$\bullet$};
\end{tikzpicture}};
\end{tikzpicture}
\end{center}

The same operation can be applied to the matrix $M$. Note that the new dots are placed on the coefficients $Z([a_1;b_2]),\ldots,Z([a_{N-1};b_N])$. The permutation of the columns does not change the sign of the determinant because the columns on which the determinant is computed are not permuted with respect to one another. 
\[ \det(M_1^1)=\det(\text{shift}(M)_1^N) = (-1)^{\lfloor \frac{N-1}{2}\rfloor}Z([a_1;b_2] + \cdots + [a_{N-1};b_N]). \]

Similarly,
\[
\det(M^N_N)  = (-1)^{\lfloor \frac{N-1}{2}\rfloor}Z([a_2;b_1] + \cdots + [a_{N};b_{N-1}]).\]

Finally, the Lewis Carroll identity \eqref{eq_Lewis_Carroll} gives relation \eqref{eq_moins_N} of Theorem~\ref{theo_ladder}. 

Moreover, the irreductibility of the terms $Z(\m)\times Z(\Delta_2 + \cdots + \Delta_{N-1})$ and $Z([a_1;b_2] + \cdots + [a_{N-1};b_N])\times Z([a_2;b_1] + \cdots + [a_{N};b_{N-1}])$ is proven in \cite[Exemple 4.5]{BLM}.

	\subsection{Proof of relation \eqref{eq_moins_N}}\label{proof_rel}

Let us apply Proposition~\ref{prop_lewis_carroll} (Lewis Carroll's identity) to the matrix $\tilde{M}=M|_{\Gamma[\id, \sigma]}$, where $M=\left( (Z(a'_i;b_j))_{1\leq i,j\leq N} \right)$, on rows $\sigma^{-1}(N),N$ and columns $\sigma(N),N$:
\begin{equation}\label{eq_LC_M}
\det(\tilde{M})\det(\tilde{M}_{\sigma^{-1}(N),N}^{\sigma(N),N}) = \det(\tilde{M}_{\sigma^{-1}(N)}^{\sigma(N)})\det(\tilde{M}_{N}^{N}) - \det(\tilde{M}_{\sigma^{-1}(N)}^{N})\det(\tilde{M}_{N}^{\sigma(N)}).
\end{equation}
Using Proposition~\ref{prop_417}, 
\begin{align*}
\det\left(\tilde{M}_{N}^{\sigma(N)}\right) & = \det\left(M_{N}^{\sigma(N)}|_{\Gamma[\id, \overline{\sigma}^{N}]}  \right), \\
\det\left(\tilde{M}_{\sigma^{-1}(N)}^{N}\right) & = \det\left(M_{\sigma^{-1}(N)}^{N}|_{\Gamma[\id, \overline{\sigma}^{\sigma^{-1}(N)}]}  \right).
\end{align*}
Since $\overline{\sigma}^{N}$ and $\overline{\sigma}^{\sigma^{-1}(N)}$ satisfy the pattern avoidance condition, using \eqref{eq_pi_matrix}, one has
\begin{align*}
\det\left(M_{N}^{\sigma(N)}|_{\Gamma[\id, \overline{\sigma}^{N}]}  \right) & = \sgn(\overline{\sigma}^{N}) Z(\Delta_1 +  \cdots + \widehat{\Delta_{\sigma(N)}}+ \cdots + \Delta_{N}),\\
\det\left(M_{\sigma^{-1}(N)}^{N}|_{\Gamma[\id, \overline{\sigma}^{\sigma^{-1}(N)}]}  \right) & = \sgn(\overline{\sigma}^{\sigma^{-1}(N)}) Z(\Delta_1 + \Delta_2+ \cdots + \Delta_{N-1}).
\end{align*}
Note that for $i\in[N]$, $M_i^{\sigma(i)}$ is obtained by taking out the row and column containing the coefficient $Z([a'_i;b_{\sigma(i)}])= Z(\Delta_{\sigma(i)})$.

Similarly,
\[ \det \left( \tilde{M}_{\sigma^{-1}(N),N}^{\sigma(N),N}\right) = \sgn(\overline{\overline{\sigma}^{\sigma^{-1}(N)}}^N)Z(\Delta_1 +  \cdots + \widehat{\Delta_{\sigma(N)}}+ \cdots + \Delta_{N-1}).  \]

Let us now consider the coefficients $\det(\tilde{M}_{N}^{N})$ and $\det(\tilde{M}_{\sigma^{-1}(N)}^{\sigma(N)})$. Using Lemma~\ref{lem_ferrers}, we know that either the two columns $\sigma(N)$ and $N$ or the two rows $\sigma^{-1}(N)$ and $N$ of $\tilde{M}$ are similar (the zeros are at the same place). 
Assume that the two columns $\sigma(N)$ and $N$ of $\tilde{M}$ are similar, meaning that there are as many zeros above the coefficient $Z([a'_{\sigma^{-1}(N)};b_{\sigma(N)}])$ as above  $Z([a'_{\sigma^{-1}(N)};b_N])$. Note that since $\sigma$ avoids the pattern $4231$, the dots in the lower right corner of the Ferrers board form a ladder. In particular, one can apply the same reasoning as in the ladder case. 

Let us cyclically permute the columns $\sigma(N), \ldots,N$ in order to obtain column $N$ in position $\sigma(N)$, and take the determinant of the minor $\text{shift}(\tilde{M}_N^N) = \text{shift}(\tilde{M})_N^{\sigma(N)}$ instead of the minor $\tilde{M}_N^N$. Because theses columns have the same zero block in their upper part, these determinants are equal. The resulting permutation is $\overline{\sigma}^N$ (same permutation as if we had taken out row $N$ and column $\sigma(N)$).

\begin{center}
\begin{tikzpicture}
\node at (0,0) {
\begin{tikzpicture}
\draw[step=0.4cm,color=black] (0,0) grid (2,2);
\fill[color=gray, opacity=0.3] (0,1.6) rectangle (0.4,2);
\fill[color=gray, opacity=0.3] (0.4,0.4) rectangle (2,1.6);
\fill[color=gray, opacity=0.3] (0.8,0) rectangle (2,0.4);
\node at (0.2,1.8) {$\bullet$};
\node at (1.8,1.4) {$\bullet$};
\node at (1.4,1) {$\bullet$};
\node at (0.6,0.6) {$\bullet$};
\node at (1,0.2) {$\bullet$};
\draw[thick] (0.8,0) rectangle (2,1.6);
\draw[thick, red] (-0.2,0.2) to (2.2,0.2);
\draw[thick, red] (1.8,-0.2) to (1.8,2.2);
\end{tikzpicture}
};
\node at (2,0) {$\longrightarrow$};
\node at (4,0) {
\begin{tikzpicture}
\draw[step=0.4cm,color=black] (0,0) grid (2,2);
\fill[color=gray, opacity=0.3] (0,1.6) rectangle (0.4,2);
\fill[color=gray, opacity=0.3] (0.4,0.4) rectangle (2,1.6);
\fill[color=gray, opacity=0.3] (0.8,0) rectangle (2,0.4);
\node at (0.2,1.8) {$\bullet$};
\node[blue] at (1.8,1.4) {$\bullet$};
\node[blue] at (1.4,1) {$\bullet$};
\node at (0.6,0.6) {$\bullet$};
\node[blue] at (1,0.2) {$\bullet$};
\draw[thick] (0.8,0) rectangle (2,1.6);
\node[gray] at (1.4,1.4) {$\bullet$};
\node[gray] at (1,1) {$\bullet$};
\node[gray] at (1.8,0.2) {$\bullet$};
\draw[thick, blue] (-0.2,0.2) to (2.2,0.2);
\draw[thick, blue] (1,-0.2) to (1,2.2);
\end{tikzpicture}
};
\node at (6,0) {$\longrightarrow$};
\node at (8,0) {
\begin{tikzpicture}
\draw[step=0.4cm,color=black] (0,0) grid (1.6,1.6);
\fill[color=gray, opacity=0.3] (0,1.2) rectangle (0.4,1.6);
\fill[color=gray, opacity=0.3] (0.4,0) rectangle (1.6,1.2);
\node at (0.2,1.4) {$\bullet$};
\node[blue] at (1.4,1) {$\bullet$};
\node[blue] at (1,0.6) {$\bullet$};
\node at (0.6,0.2) {$\bullet$};
\draw[thick] (0.8,0) rectangle (1.6,1.2);
\node[gray] at (1,1) {$\bullet$};
\end{tikzpicture}
};
\end{tikzpicture}
\end{center}

For $j\notin I$, there are still dots placed on the $Z([a_j;b_j])$, and the new dots are placed on the $Z([a_{i_{k+1}};b_{i_k}])$, for $1\leq k \leq r$.

Hence,
\[\det(\tilde{M}_N^N)= \det(\text{shift}(\tilde{M})_N^{\sigma(N)}) = \sgn(\overline{\sigma}^N)Z(\m'').\]
Similarly, 
\[\det(\tilde{M}_{\sigma^{-1}(N)}^{\sigma(N)})=  \sgn(\overline{\sigma}^{\sigma^{-1}(N)})Z(\m').\]

Let us now consider the signatures of the permutations, using the criteria of Lemma~\ref{lem_sgn}: 
\[\sgn(\sigma) = \sum_{\bullet}\sharp\left\lbrace\begin{tikzpicture}[scale=0.8,baseline=(AA.base)]
\draw[step=0.4cm,color=black] (0,0) grid (1.2,0.8);
\fill[color=gray, opacity=0.3] (0,0) rectangle (1.2,0.8);
\node[blue] at (1,0.6) {$\bullet$};
\node  at (0.2,0.2) {$\bullet$};
\node (AA) at (0.6,0.3) {};
\end{tikzpicture}
\right\rbrace.\]

Let us separate the grid (or matrix) in 4 blocks, $A,B,C$ and the upper right being empty (or filled with zeros). 
\begin{center}
\begin{tikzpicture}
\draw (0,0) rectangle (2.4,2.4);
\node at (1.8,2) {$(0)$};
\draw[ultra thick, purple] (0.03,1.63) rectangle (1.17, 2.37);
\node[purple] at (0.6,2) {$A$};
\draw[ultra thick, blue] (0.03,0.03) rectangle (1.17, 1.57);
\node[blue] at (0.6,0.8) {$B$};
\draw[ultra thick, red] (1.23,0.03) rectangle (2.37, 1.57);
\node[red] at (1.8,0.8) {$C$};
\node at (1.4,0.2) {$\bullet$};
\node at (1.4,-0.2) {\tiny $(N,\sigma(N))$};
\node at (2.2,1.4) {$\bullet$};
\node at (3.3,1.4) {\tiny $(\sigma^{-1}(N),N)$};
\end{tikzpicture}
\end{center}
Note that zone $C$ contains $r$ dots, where $r$ is the cardinal of $I$. 
Going from $\sigma$ to $\overline{\sigma}^N$, we take out the dot on the last line. It is clear that all dots in zones $A$, $B$ and $C$ except the bottom one will have the same contribution to the sum $\ell$. The difference is thus equal to the contribution of the bottom dot $(N,\sigma(N))$, which is $r-1$. Hence
\[\sgn(\overline{\sigma}^N) = \sgn(\sigma)(-1)^{r-1}.\]

Now, going from $\sigma$ to $\overline{\sigma}^{\sigma^{-1}(N)}$, we take out the dot $(\sigma^{-1}(N),N)$. All dots is block $A$ will still have the same contribution, but the dots in block $B$ will count one less dot in their upper right corner. The remaining dots in block $C$ will also count one less dot. Thus,
\[\sgn(\overline{\sigma}^{\sigma^{-1}(N)}) = \sgn(\sigma)(-1)^{r-1 + \sharp B}.\] 

Going from $\sigma$ to $\overline{\overline{\sigma}^{\sigma^{-1}(N)}}^N$, we take out both these dots, and the signature of the resulting permutation is
\[ \sgn(\overline{\overline{\sigma}^{\sigma^{-1}(N)}}^N) = \sgn(\sigma)(-1)^{\sharp B +1}.\]

Simplifying the signs in \eqref{eq_LC_M}, we get the desired relation \eqref{eq_moins_N}.

Finally, in the case where it is not the two columns but the two rows $\sigma^{-1}(N)$ and $N$ of $\Gamma [\id,\sigma]$ which are identical, one can apply the same procedure of cyclically permuting the rows of the matrix $\tilde{M}$. As a result,
\begin{align*}
\det(\tilde{M}_N^N) &= \sgn(\overline{\sigma}^{\sigma^{-1}(N)})Z(\m''),\\
\det(\tilde{M}_{\sigma^{-1}(N)}^{\sigma(N)}) &=  \sgn(\overline{\sigma}^{N})Z(\m').
\end{align*} 
But a symmetric reasoning on the signatures, we also get the desired relation, which concludes the proof of \eqref{eq_moins_N}.

	\subsection{Proof of irreducibility}\label{proof_irred}
	
\subsubsection{Irreductibility of $Z(\m)\times Z(\m\setminus \Delta_N,\Delta_{\sigma(N)})$:}


As in Remark~\ref{rem_avoid_real}, let us prove this result by induction on $N\geq 3$, the number of segments in $\m$. For $N=3$, assuming $\sigma(3)\neq 3$, then $Z(\m\setminus \Delta_3,\Delta_{\sigma(3)}) = Z(\Delta)$, where $\Delta$ is necessarily a good segment of $\m$ by Proposition~\ref{prop_existence}. By definition, $Z(\m)\times Z(\Delta)$ is irreducible.

Let $N\geq 4$, from Proposition~\ref{prop_existence}, as $\sigma$ avoids the patterns $4231$ and $3412$, we know that either $\m$ is a ladder or it has at least one good segment $\Delta$ which is different from $\Delta_N$ and $\Delta_{\sigma(N)}$. The ladder case has been considered in Section~\ref{sect_ladders}. Otherwise, using Lemma~\ref{lem_good_sub}, we know that $\Delta$ is also a good segment of $\m':=\m\setminus \Delta_N,\Delta_{\sigma(N)}$ (on the same side). 

We can assume without loss of generality that $\Delta$ is a good left segment of $\m$ and $\m'$. Then, as in the proof of Corollary~\ref{cor_2good},
\begin{align*}
Z(\m)\times Z(\m') &\hookrightarrow \underbrace{Z(\m)\times Z(\Delta)}_{\text{irreducible}}\times Z(\m'\setminus \Delta) \cong Z(\Delta) \times Z(\m)\times Z(\m'\setminus \Delta)\\
&\hookrightarrow \underbrace{Z(\Delta) \times Z(\Delta)}_{\text{irreducible}} \times Z(\m\setminus \Delta)\times Z(\m'\setminus \Delta).
\end{align*}
By induction, $Z(\m\setminus \Delta)\times Z(\m'\setminus \Delta)$ is irreducible.

Similarly, 
\[
Z(\m)\times Z(\m') \twoheadleftarrow Z(\m\setminus \Delta)\times Z(\m'\setminus \Delta)\times Z(\Delta) \times Z(\Delta).
\]
We conclude that $Z(\m)\times Z(\m')$ is irreducible by Lemma~\ref{lem_irr}.

\subsubsection{Irreducibility of $Z(\m')\times Z(\m'')$:}

As before, we prove this by induction, this time on $N-r$, where $r=|J|$. If $r=N$, then $\m$ is a ladder, and the result was proven in \cite[Exemple 4.5]{BLM}. If $N>r$, then $\m$ is not a ladder. In that case, either $\Delta_1$ or $\Delta_{\sigma(1)}$ is a good segment of $\m$ and does not form a ladder with $\Delta_N$ and $\Delta_{\sigma(N)}$. This good segment is not one of the $\Delta_{i_k}$ and thus it is a segment of $\m'$ and $\m''$. Let us prove it is a common good segment of $\m'$ and $\m''$.

Let us assume that $\sigma(N)\neq 1$ and that $\Delta_1$ is a good left segment of $\m$. Clearly, $\Delta_1$ does not precede any segment of $\m'$ or $\m''$.

Suppose $\Delta_1$ does not form a ladder with the segments which precedes it in $\m'$. Thus there exists $\Delta,\Delta'$ such that $\Delta \prec \Delta_1$, $\Delta' \prec \Delta_1$ and $\Delta' \subsetneq \Delta$. As $\Delta_1$ is a good segment of $\m$, necessarily exactly one of $\Delta,\Delta'$ is in $\m$ while the other has be shifted. If
$\Delta'\in \m$, then $\Delta=[a_{i_{k}};b_{i_{k+1}}]$ for some $k$. In that case, $\Delta_{i_{k+1}} \prec \Delta_1$ and $\Delta' \nprec \Delta_{i_{k+1}}$, which contradicts the fact that $\Delta_1$ is a good segment of $\m$.

\begin{center}
\begin{tikzpicture}[scale=0.8]
\draw[ultra thick] (5,1) to (8,1);
\draw[ultra thick] (2,0.5) to (7,0.5);
\draw[ultra thick, red] (2,0.5) to (0,0.5);
\draw[ultra thick] (3,0) to (6,0);
\node at (8.5,1) {$\Delta_1$};
\node at (7.6,0.5) {$b_{i_{k+1}}$};
\node at (6.5,0) {$\Delta'$};
\node at (2,0.8) {$a_{i_{k}}$};
\node[red] at (0,0.8) {$a_{i_{k+1}}$};
\end{tikzpicture}
\end{center}
 
If $\Delta=[a_i;b_i]\in \m$, then there is $k$ such that $\Delta'=[a_{i_{k}};b_{i_{k+1}}]$. If both $b_{i_k}>b_i$ and $a_{i_{k+1}}<a_i$ then $i\in I$, which contradicts the fact that $\Delta$ has not been shifted. 
\begin{center}
\begin{tikzpicture}[scale=0.8]
\draw[ultra thick] (3,1.5) to (7,1.5);
\draw[ultra thick, purple] (2,1) to (6,1);
\draw[ultra thick] (1,0.5) to (5,0.5);
\draw[ultra thick] (2,0) to (4,0);
\draw[ultra thick, red] (2,0) to (0,0);
\node at (7.5,1.5) {$\Delta_1$};
\node at (5.4,0.5) {$\Delta$};
\node at (4.6,0) {$b_{i_{k+1}}$};
\node at (2,-0.3) {$a_{i_{k}}$};
\node[red] at (0,-0.3) {$a_{i_{k+1}}$};
\node[purple] at (6.4,1) {$b_{i_{k}}$};
\node[purple] at (1.7,1) {$a_{i_{k}}$};
\end{tikzpicture}
\end{center}
If $b_{i_k}<b_i$, then $\Delta_{i_{k}}, \Delta, \Delta_1$ do not form a ladder in $\m$, if $a_{i_{k+1}}>a_i$ then  $\Delta, \Delta_{i_{k+1}}, \Delta_1$ do not form a ladder in $\m$. In both cases, it contradicts the fact that $\Delta_1$ is a good segment of $\m$.

\begin{center}
\begin{tikzpicture}
\node at (0,0) {
\begin{tikzpicture}[scale=0.8]
\draw[ultra thick] (2,1.5) to (6,1.5);
\draw[ultra thick] (0,1) to (5,1);
\draw[ultra thick] (1,0.5) to (4,0.5);
\draw[ultra thick, red] (1,0) to (3,0);
\node at (6.5,1.5) {$\Delta_1$};
\node at (5.4,1) {$\Delta$};
\node at (4.6,0.5) {$b_{i_{k+1}}$};
\node at (0.6,0.5) {$a_{i_{k}}$};
\node[red] at (3.4,0) {$b_{i_{k}}$};
\node[red] at (0.6,0) {$a_{i_{k}}$};
\end{tikzpicture}};
\node at (6.5,0) {
\begin{tikzpicture}[scale=0.8]
\draw[ultra thick] (3,1) to (6,1);
\draw[ultra thick] (0,0.5) to (5,0.5);
\draw[ultra thick] (2,0) to (4,0);
\draw[ultra thick, red] (1,0) to (2,0);
\node at (6.5,1) {$\Delta_1$};
\node at (5.4,0.5) {$\Delta$};
\node at (4.6,0) {$b_{i_{k+1}}$};
\node at (2.1,-0.3) {$a_{i_{k}}$};
\node[red] at (0.5,-0.1) {$a_{i_{k+1}}$};
\end{tikzpicture}};
\end{tikzpicture}
\end{center}
Hence by the criteria in section~\ref{sect_comb}, $\Delta_1$ is good segment of $\m'$. We show in a similar way that $\Delta_1$ is good segment of $\m''$. 

Then, 
\begin{align*}
Z(\m')\times Z(\m'') &\hookrightarrow \underbrace{Z(\Delta_1) \times Z(\Delta_1)}_{\text{irreducible}} \times Z(\m'\setminus \Delta_1)\times Z(\m''\setminus \Delta_1).
\end{align*} 
By induction, $Z(\m'\setminus \Delta_1)\times Z(\m''\setminus \Delta_1)$ is irreducible.

Similarly, 
\[
Z(\m')\times Z(\m'') \twoheadleftarrow Z(\m'\setminus \Delta_1)\times Z(\m''\setminus \Delta_1)\times Z(\Delta_1) \times Z(\Delta_1).
\]
We conclude that $Z(\m')\times Z(\m'')$ is irreducible by Lemma~\ref{lem_irr}.

\section{Relation to quantum affine algebras representations}\label{sect_QAA}
\subsection{Translation of results}
As mentioned above, the result of Theorem~\ref{theo_main} has a quantum affine analog through \emph{quantum affine Schur-Weyl duality}. Indeed, when $q$ is not a root of unity, Chari-Pressley \cite{CP96} have established an equivalence of categories between the category of finite-dimensional representations of the affine Hecke algebra $\dot{H}_{q^2}(n)$ and the category of (level $n$) finite-dimensional representations of the quantum affine algebra $U_q(\widehat{\mathfrak{sl}}_k)$, when $k \geq n$. Moreover, through \emph{type theory} (see for example \cite{Hei}), it is known that finite-dimensional representations of the affine Hecke algebra $\dot{H}_{q^2}(n)$ are equivalent to finite length representations of $\GL_n(F)$.

This equivalence is \emph{monoidal}, in the sense that the parabolic induction of two representations in $\CC$ is translated into the tensor product of the corresponding $U_q(\widehat{\mathfrak{sl}}_k)$-modules.

Instead of multisegments, finite-dimensional irreducible $U_q(\widehat{\mathfrak{sl}}_k)$-modules have been classified \cite{CP95} using \emph{Drinfeld polynomials}, which correspond to their highest-weights. By a process similar to the reduction to cuspidal lines described in the beginning of Section~\ref{sect_rep}, the study of the category of finite-dimensional $U_q(\widehat{\mathfrak{sl}}_k)$-modules amounts to the study of a skeleton Serre subcategory $\mathscr{C}$, introduced by Hernandez-Leclerc (see \cite[Section 3.7]{HL10}), in relation to cluster algebras. Let $\mathscr{R}$ denote the Grothendieck ring of the monoidal category $\mathscr{C}$.

Simple objects in the category $\mathscr{C}$ are then parametrized, up to isomorphism, by monomials in the formal variables $Y_{i,p}$, $(i,p) \in \hat{I}:=\left\lbrace \{1,\ldots, k-1\}\times \BZ \mid i+p+1 \in 2\BZ\right\rbrace$. The correspondence between segments and formal variables is as follows:
\begin{equation}\label{corres_seg_Y}
\begin{array}{ccc}
 {[}a; b] & \mapsto & Y_{b-a+1, -a-b},\\
{[}\frac{1-i-p}{2}; \frac{i-p-1}{2} ] & \mapsfrom &  Y_{i,p}.
\end{array}
\end{equation}

Since we are using the Zelevinsky classification for the representations of $\GL_n(F)$, from now on irreducible $U_q(\widehat{\mathfrak{sl}}_k)$-modules will be denoted $L(M)$, with $M$ their highest loop-weight, in the set of dominant loop-weights:
\begin{equation*}
\hat{P}_\ell := \left\lbrace \prod_{j=1}^N Y_{i_j,p_j} \mid ~ \forall ~ 1\leq j\leq N , (i_j,p_j) \in \hat{I} \right\rbrace.
\end{equation*}

Through this correspondence, ladder representations are usually called \emph{snake modules} in the context of quantum affine algebras. For completeness, recall the definition of snakes modules by Mukhin-Young. For $M=\prod_{j=1}^N Y_{i_j,p_j}\in \hat{P}_\ell$, the simple module $L(M)$ is a snake module if and only if, for all $1\leq j\leq N$, 
\[ p_{j+1} - p_{j} \geq | i_{j+1} - i_j| +2.
\]
It clearly translates to the definition of ladders, as in Definition~\ref{def_ladder}. Note that a definition of snake modules for type $B$ quantum affine algebras was also introduced by Mukhin-Young.

Moreover, as stated above, Theorem~\ref{theo_ladder} from \cite{LM14} was previously established by Mukhin-Young in terms of snake modules.

For $M=\prod_{j=1}^N Y_{i_j,p_j}\in \hat{P}_\ell$ such that $L(M)$ is a snake module, we have the following relation, in the Grothendieck ring $\mathscr{R}$ \cite[Theorem 4.1]{MY12}:
\begin{multline}\label{eq_T_sys_snakes}
\left[L\left(\prod_{j=1}^{N-1} Y_{i_j,p_j}\right)\right]\cdot\left[L\left(\prod_{j=2}^{N} Y_{i_j,p_j}\right)\right] = \left[L\left(\prod_{j=2}^{N-1} Y_{i_j,p_j}\right)\right]\cdot \left[L(M)\right] \\+ \left[L(M')\right]\cdot \left[L(M'')\right],
\end{multline}
where $M',M''$ are called the \emph{neighboring snakes} of $M$, and correspond to $\m'$ and $\m''$ in this case. Note that relation \eqref{eq_T_sys_snakes} was established in \cite{MY12} also in type $B$. Moreover, as in our result, both terms on the left hand side of \eqref{eq_T_sys_snakes} correspond to irreducible modules.

For these reasons, our theorem~\ref{theo_main} is a generalization of \cite[Theorem 4.1]{MY12}, and we have established some new relations between irreducible representations of $U_q(\widehat{\mathfrak{sl}}_k)$.

\begin{ex}\label{ex_QAA}
Let us translate the relations obtained in Example~\ref{ex_segs}:
\begin{enumerate}
	\item For $k\geq 4$, let $M=Y_{2,-5}Y_{3,-2}Y_{1,-2}\in \hat{P}_\ell$. As before, the corresponding regular  representation  $L(M)$ is real. Applying Theorem~\ref{theo_main} gives the following relation:
	\[ L(Y_{2,-5}Y_{3,-2})\cdot L(Y_{3,-2}Y_{1,-2}) = L(M)\cdot L(Y_{3,-2}) + L(Y_{3,-2})\times L(Y_{3,-4}Y_{3,-2}).
	\]

	\item For $k\geq 7$, let $M = Y_{6,-7}Y_{3,-8}Y_{5,-4}Y_{2,-5} \in \hat{P}_\ell$. The corresponding regular  representation  $L(M)$ is real and applying Theorem~\ref{theo_main} gives the following relation:
\begin{multline*}
L(Y_{6,-7}Y_{3,-8}Y_{5,-4})\cdot L(Y_{6,-7}Y_{5,-4}Y_{2,-5}) = L(M)\cdot L(Y_{6,-7}Y_{5,-4}) \\
+ L(Y_{6,-7}Y_{5,-4}Y_{1,-6})\cdot L(Y_{6,-7}Y_{5,-4}Y_{4,-7}).
\end{multline*}
Whereas applying Corollary~\ref{cor_main} gives the following relation:
\begin{multline*}
L(Y_{3,-8}Y_{5,-4}Y_{2,-5})\cdot L(Y_{6,-7}Y_{3,-8}Y_{2,-5}) = L(M)\cdot L(Y_{3,-8}Y_{2,-5}) \\
+ L(Y_{3,-8}Y_{2,-5}Y_{4,-5})\cdot L(Y_{3,-8}Y_{2,-5}Y_{7,-6}).
\end{multline*}
Note that when $k=7$, the right hand side of the last relation simplifies as $L(Y_{3,-8}Y_{2,-5}Y_{4,-5})\cdot L(Y_{3,-8}Y_{2,-5})$.
\end{enumerate}
\end{ex}

\subsection{Relation to cluster algebras}\label{sect_cluster}
In \cite{ACAA}, Hernandez and Leclerc proved that the Grothendieck ring $\mathscr{R}$ had a \emph{cluster algebra} structure for which the initial cluster variables are \emph{Kirillov-Reshetikhin} modules (or Speh representations, as in Definition~\ref{def_speh}). Moreover, one of the key ingredients used for this result is the fact that the $T$-system relations (of which the Mukhin-Young extended $T$-systems are generalizations) correspond to \emph{exchange relations} in the cluster algebra structure. The same authors also conjectured \cite[Conjecture 5.2]{ACAA} that the cluster variables were in bijection with the prime real simple modules. Part of this conjecture was proven by Kashiwara-Kim-Oh-Park in \cite{KKOP21}, where they proved that all cluster variables correspond to prime real simple modules.

In \cite{DLL20} Duan-Li-Luo proved that prime snake modules correspond to cluster variables, thus proving Hernandez-Leclerc's conjecture for snake modules, and for that purpose introduced new relations in the Grothendieck ring $\mathscr{R}$, which they interpreted as exchange relations. However, it is unclear whether (some of) the Mukhin-Young extended $T$-systems can be interpreted as exchange relations.

One of the motivations behind this work was to obtain more generalizations of the $T$-system relations, which could be interpreted as exchange relations. We conjecture that, equipped with more explicit relations such as \eqref{eq_moins_N} and \eqref{eq_moins_1}, one could prove that all prime real regular representations (for which there exists the criterion of Theorem~\ref{theo_pat} \cite{LM18}) correspond to cluster variables. 

However, we already observe that not all relations \eqref{eq_moins_N} and \eqref{eq_moins_1} have the form of an exchange relation. For example, in the relation in Example~\ref{ex_QAA} (1), one of the factors in the left hand side is not prime $L(Y_{3,-2}Y_{1,-2}) \cong  L(Y_{3,-2})\cdot L(Y_{1,-2})$. Thus the left hand side is a product of three prime irreducible modules, and the relation cannot be an exchange relation.


\begin{appendix}

	\section{Ferrers boards}\label{sect_Ferrers}
	
Permutations in $S_N$ can be represented by placing dots in an $N\times N$-grid. For all $1\leq i \leq N$, place a dot in the box $(i,\sigma(i))$. Then the set $\Gamma[\id, \sigma]\subset [N]^2$ can be represented in the grid by colouring the boxes $(i,\sigma'(i))$, for $\sigma'\leq \sigma$.

\begin{ex}
The grid corresponding to the permutation $\sigma = 152463$ is
\begin{center}
\begin{tikzpicture}
\draw[step=0.4cm,color=black] (0,0) grid (2.4,2.4);
\fill[color=gray, opacity=0.3] (0,2) rectangle (0.4,2.4);
\fill[color=gray, opacity=0.3] (0.4,1.2) rectangle (2,2);
\fill[color=gray, opacity=0.3] (0.8,0.8) rectangle (2,1.2);
\fill[color=gray, opacity=0.3] (0.8,0) rectangle (2.4,0.8);
\node at (0.2,2.2) {$\bullet$};
\node at (1.8,1.8) {$\bullet$};
\node at (0.6,1.4) {$\bullet$};
\node at (1.4,1) {$\bullet$};
\node at (2.2,0.6) {$\bullet$};
\node at (1,0.2) {$\bullet$};
\end{tikzpicture}
\end{center}
\end{ex}

\begin{rem}
By the study of Sj\"{o}strand \cite{S07}, the set $\Gamma[\id, \sigma]$ is a \emph{right-aligned Skew Ferrers board}, in particular it is the union of the rectangles \begin{tikzpicture}[scale=0.8]
\draw[step=0.4cm,color=black] (0,0) grid (1.2,0.8);
\fill[color=gray, opacity=0.3] (0,0) rectangle (1.2,0.8);
\node at (1,0.6) {$\bullet$};
\node at (0.2,0.2) {$\bullet$};
\end{tikzpicture} for all pairs of (not necessarily distinct) dots $\bullet$.
\end{rem}

\begin{lem}\label{lem_ferrers}
The $\sigma$ be a permutation in $S_N$ which avoids the pattern $3412$, then either the columns $\sigma(N)$ and $N$ or the rows $\sigma^{-1}(N)$ and $N$ of $\Gamma[\id, \sigma]$ are identical.
\end{lem}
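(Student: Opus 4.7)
The plan is to prove the contrapositive: if neither the two columns nor the two rows of $\Gamma[\id,\sigma]$ coincide as prescribed, then $\sigma$ contains the pattern $3412$. One may assume $\sigma(N)\neq N$, as otherwise $\sigma^{-1}(N)=N$ and both conditions hold trivially. Set $p:=\sigma^{-1}(N)$ and $q:=\sigma(N)$.

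The first step is to use the characterisation of $\Gamma[\id,\sigma]$ recalled in the remark above: it is the union, over all inversions $(a,b)$ of $\sigma$ (pairs with $a<b$ and $\sigma(a)>\sigma(b)$), of the rectangles $[a,b]\times[\sigma(b),\sigma(a)]$, together with the dots themselves. A case analysis according to which inversion covers a given cell yields explicit intervals for the four strips. Column $N$ equals $\{(i,N):p\le i\le N\}$, because the constraint $N\in[\sigma(b),\sigma(a)]$ forces $\sigma(a)=N$, hence $a=p$; dually, row $N$ equals $\{(N,j):q\le j\le N\}$. Row $p$ equals $\{(p,j): m\le j\le N\}$ with
\[ m=\min\{\sigma(r)\colon r>p\}, \]
and column $q$ equals $\{(i,q): \ell\le i\le N\}$ (in row indices) with
\[ \ell=\min\{r\colon \sigma(r)>q\}. \]
The inversions that do not involve $(p,N)$ or $(N,q)$ cannot extend these two intervals further: in the row-$p$ analysis, the left endpoint contributed by such an inversion is a value $\sigma(b)$ with $b>p$, hence automatically at least $m$; in the column-$q$ analysis, it is a row $a$ with $\sigma(a)>q$, hence at least $\ell$.

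From these formulas the two coincidence conditions become elementary: columns $q$ and $N$ agree if and only if $\ell=p$, equivalently $\sigma(r)<q$ for every $r<p$; and rows $p$ and $N$ agree if and only if $m=q$, equivalently $\sigma(r)>q$ for every $p<r<N$.

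Assume now that both conditions fail. Then there exist $r_1<p$ with $\sigma(r_1)>q$ and $p<r_2<N$ with $\sigma(r_2)<q$; since $\sigma(r_1)\neq\sigma(p)=N$, we in fact have $q<\sigma(r_1)<N$. The four positions $r_1<p<r_2<N$ then carry the $\sigma$-values $\sigma(r_1),\,N,\,\sigma(r_2),\,q$ satisfying $\sigma(r_2)<q<\sigma(r_1)<N$, whose relative order is $3,4,1,2$. This is precisely a $3412$ pattern, contradicting the hypothesis. The only non-routine ingredient is the explicit interval description of the four strips obtained in the second paragraph; once it is in hand, extracting the forbidden pattern is mechanical.
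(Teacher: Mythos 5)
Your proof is correct and takes essentially the same route as the paper's: both argue by contraposition, extracting from the failure of the column condition a dot strictly above $(\sigma^{-1}(N),N)$ and strictly to the right of $(N,\sigma(N))$, from the failure of the row condition a dot strictly below and strictly to the left, and then observing that these two dots together with $(\sigma^{-1}(N),N)$ and $(N,\sigma(N))$ form a $3412$ pattern. The only difference is presentational: you derive explicit interval descriptions of the relevant rows and columns of $\Gamma[\id,\sigma]$ from the inversion--rectangle characterization, where the paper conveys the same information pictorially.
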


\begin{proof}
If the columns $\sigma(N)$ and $N$ of $\Gamma[\id, \sigma]$ are different, then there is a dot above $(\sigma^{-1}(N),N)$ and to the right of $(N,\sigma(N))$. 
\begin{center}
\begin{tikzpicture}[scale=0.8]
\fill[color=black,fill=gray, opacity=0.3] (0,0) rectangle (2.4,1.6);
\fill[color=black,fill=gray, opacity=0.3] (0,1.6) rectangle (0.8,2.4);
\draw (0,0) rectangle (0.4,2.4);
\draw (0,1.2) rectangle (2.4,1.6);
\draw (2,0) rectangle (2.4,1.6);
\draw (0,0) rectangle (2.4,0.4);
\node at (2.2,1.4) {$\bullet$};
\node at (0.2,0.2) {$\bullet$};
\node at (0.6,2.2) {$\bullet$};
\draw (0,0) rectangle (0.8,2.4);
\node at (2.4,1.8) {\tiny $(\sigma^{-1}(N),N)$};
\node at (0,-0.2) {\tiny $(N,\sigma(N))$};
\end{tikzpicture}
\end{center}
Similarly, if the rows $\sigma^{-1}(N)$ and $N$ are different, then there is a dot to the left of $(N,\sigma(N))$ and below $(\sigma^{-1}(N),N)$.

Thus if both the columns $\sigma(N)$ and $N$ and the rows $\sigma^{-1}(N)$ and $N$ are different, then there a $3412$ configuration, which is a contradiction.
\begin{center}
\begin{tikzpicture}[scale=0.8]
\draw[step=0.4cm,color=black] (0,0) grid (1.6,1.6);
\fill[color=gray, opacity=0.3] (0,0.4) rectangle (1.2,1.6);
\fill[color=gray, opacity=0.3] (0.4,0) rectangle (1.6,0.4);
\fill[color=gray, opacity=0.3] (1.2,0.4) rectangle (1.6,1.2);
\node at (1,1.4) {$\bullet$};
\node at (1.4,1) {$\bullet$};
\node at (0.2,0.6) {$\bullet$};
\node at (0.6,0.2) {$\bullet$};
\end{tikzpicture}
\end{center}
\end{proof}

The following is clear from the definition of the signature.
\begin{lem}\label{lem_sgn}
The signature of the permutation $\sigma$ is equal to $(-1)^\ell$, where $\ell$ is the sum over all dots $\bullet$ of the number of dots strictly above and to the right of $\bullet$.
\end{lem}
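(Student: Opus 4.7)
The plan is to identify the quantity $\ell$ with the number of inversions of $\sigma$ and then invoke the standard formula $\sgn(\sigma)=(-1)^{\mathrm{inv}(\sigma)}$.

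First I would fix the convention used in the preceding example: the dot associated to index $i$ sits in row $i$ (counted from the top) and column $\sigma(i)$. Under this convention, saying that a dot $\bullet'=(j,\sigma(j))$ lies \emph{strictly above and to the right} of a dot $\bullet=(i,\sigma(i))$ means precisely that $j<i$ and $\sigma(j)>\sigma(i)$. Hence, summing over all dots $\bullet=(i,\sigma(i))$, the quantity $\ell$ counts the number of ordered pairs $(j,i)$ with
\[
j<i \quad\text{and}\quad \sigma(j)>\sigma(i),
\]
each pair being counted exactly once (from the lower dot of the pair). This is, by definition, the number of inversions $\mathrm{inv}(\sigma)$ of $\sigma$.

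The conclusion then follows from the classical identity $\sgn(\sigma)=(-1)^{\mathrm{inv}(\sigma)}$, valid for every $\sigma\in \mathfrak{S}_N$. There is no real obstacle here; the only thing to be careful about is the orientation convention of the grid, so that ``above and to the right'' translates correctly into the inequalities defining an inversion.
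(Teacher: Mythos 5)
Your proof is correct and matches the paper's intent exactly: the paper offers no written argument (it states the lemma is ``clear from the definition of the signature''), and your identification of $\ell$ with $\mathrm{inv}(\sigma)$ — noting that each inversion pair $j<i$, $\sigma(j)>\sigma(i)$ is counted once, from the lower dot — is precisely the standard argument being alluded to. Your care with the row-from-the-top orientation convention is the only point where one could slip, and you handled it correctly.
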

\end{appendix}

\bibliographystyle{alpha}
\bibliography{article}

\end{document}